\newcommand{\mint}{-\hspace{-0.9em}\int}
\newcommand{\mintdis}{-\hspace{-1.1em}\int}
\numberwithin{equation}{section}
\newtheorem{theorem}{Theorem}[section]
\newtheorem{lem}[theorem]{Lemma}
\newtheorem{thm}[theorem]{Theorem}
\newtheorem{pro}[theorem]{Proposition}
\newcounter{Cnumber}
\def\s{\,\,\,\,}
\def\lan{\langle}
\def\ran{\rangle}
\def\dint{\displaystyle{\int}}
\def\mv{1.7ex}
\def\endproof{$\hfill\Box$\\}
\def\R{\mathbb{R}}
\def\C{\mathbb{C}}
\def\V{{\mathcal V}}
\def\W{{\mathcal W}}
\def\C{\mathbb{C}}
\def\S{\mathbb{S}}
\def\N{\mathbb{N}}
\def\ve{\varepsilon}
\def\div{{\rm div\,}}
\title[Willmore minimizers with small isoperimetric ratio]
{\bf Asymptotics of Willmore minimizers with 
prescribed small isoperimetric ratio}
\author{{\sc Ernst Kuwert} and {\sc Yuxiang Li}}
\address{\newline
Ernst Kuwert:
Mathematisches Institut,
Albert-Ludwigs-Universit\"at Freiburg,
D-79104 Freiburg, Germany
\textit{E-mail: ernst.kuwert@math.uni-freiburg.de}
\newline
\newline
Yuxiang Li:
Department of Mathematical Sciences,
Tsinghua University, Beijing 100084, P.R. China, 
\textit{E-mail: yxli@math.tsinghua.edu.cn}}
\date{}
\begin{document}
\maketitle
\begin{abstract} 
{\noindent 
We consider surfaces in ${\mathbb R}^3$ of type ${\mathbb S}^2$ 
which minimize the Willmore functional with prescribed 
isoperimetric ratio. In \cite{Schy12} Schygulla proved 
the existence of smooth minimizers. In the singular limit 
when the isoperimetric ratio converges to zero, he showed 
convergence to a double round sphere in the sense of 
varifolds. Here we give a full blowup analysis of 
this limit, showing that the two spheres are connected 
by a catenoidal neck. Besides its geometric interest, 
the problem was studied as a simplified model in the 
theory of cell membranes, see e.g. \cite{BLS91}.}
\end{abstract}

\section{Introduction}

The isoperimetric ratio of a smooth embedding $f:{\mathbb S}^2 \to \R^3$
is defined as 
\begin{equation}
\label{defratio}
\sigma(f) = 6 \sqrt{\pi}\, \frac{\V(f)}{\mu(f)^{3/2}} \in (0,1].
\end{equation}
The area $\mu(f)$ and enclosed volume $\V(f)$ are given by
\begin{equation}
\mu(f) = \int_{{\mathbb S}^2} J\!f\,d\mu_{\S^2} \quad \mbox{ and } \quad
{\mathcal V}(f) 
= \frac{1}{3} \int_{{\mathbb S}^2} \langle f,\vec{n} \rangle\,d\mu_f,
\end{equation}
where $\vec{n}:\S^2 \to \S^2$ is the exterior unit normal. The
normalization is chosen such that $\sigma(\S^2) = 1$. It is an 
interesting geometric problem to find natural representatives among 
surfaces with prescribed  isoperimetric ratio $\sigma \in (0,1)$.
In \cite{Schy12} Schygulla introduced a variational approach 
by minimizing the Willmore energy
$$
\W(f) = \frac{1}{4} \int_{\S^2} |\vec{H}|^2\,d\mu_f.
$$
He proved that the infimum among topological spheres
is attained by a smooth minimizer, for any prescribed 
isoperimetric ratio $\sigma \in (0,1]$. An existence result 
for higher genus surfaces was proved more recently by Keller,
Mondino and Rivi\`{e}re, assuming that the infimum of 
the energy satisfies certain inequalities \cite{KMR14}. 
In any case minimizers solve the Euler Lagrange equation
\begin{equation}
\label{eqeuler}
\frac{1}{2}\big(\Delta_g H + |A^\circ|^2 H\big) = 
\Lambda\, \sigma(f)\,\Big(\frac{1}{\V(f)} + \frac{3}{2\mu(f)} H\Big). 
\end{equation}
Here the left hand side is the Euler Lagrange operator
of the Willmore energy, and the right hand side is the 
Euler Lagrange operator of the isoperimetric ratio. 
By Alexandrov's theorem the isoperimetric constraint is 
nondegenerate and hence the Lagrange multipier $\Lambda \in \R$ 
is well-defined, with the only exception of round spheres.\\ 
\\ 
In his paper \cite{Schy12} Schygulla also studies 
sequences of minimizers with isoperimetric ratio 
converging to zero. He shows that up to translations and dilations, 
the sequence converges in the varifold sense to a round sphere of 
multiplicity two \cite[Thm. 2]{Schy12}. In the present paper
we study this singular limit more precisely, and obtain the 
following asymptotic results. Here we denote by $N,S$ the
north and south pole of $\S^2$ and by $\pi_N,\pi_S$ the 
stereographic projections from the poles. 

\begin{thm}\label{main}
Let $f_k:\S^2\rightarrow\R^3$ be conformally parametrized $\W$-minimizers
for prescribed isoperimetric ratio $\sigma(f_k) = \sigma_k \to 0$.
After conformal reparametrization, scaling and translating, 
and passing to a subsequence, the following hold:

\begin{itemize}
\item[{\rm (1)}] $\mu(f_k) = 1$, \vspace{2mm} 
\item[{\rm (2)}] $f_k$ converges locally smoothly on $\S^2 \backslash \{N\}$
to a conformal immersion $f^0:\S^2 \to \R^3$ with $f^0(N)=0$. \vspace{2mm}
\item[{\rm (3)}] There exist $r_k \to 0$  such that the sequence
$f^1_k(z) = f_k \circ \pi_S^{-1}(r_k z)$ converges
locally smoothly on $\R^2$ to a conformal immersion
$f^1:\R^2 \to \R^3$ with $f^1(\infty)=0$.\vspace{2mm}
\item[{\rm (4)}] Both $f^0,f^1$ are conformal equivalences
to the same round sphere $\S$ of area $1/2$. \vspace{2mm}
\item[{\rm (5)}] There exist $t_k,\,\lambda_k \rightarrow 0$, 
such that the sequence $f_k^2(z) = \lambda_k^{-1} f_k \circ \pi_S^{-1}(t_k z)$
converges locally smoothly on $\R^2 \backslash \{0\}$ to a conformally
parametrized catenoid $f^2:\R^2 \backslash \{0\} \to \R^3$, with 
the origin and the center of $\S$ on the symmetry axis. \vspace{2mm}
\item[{\rm (6)}] We have the energy identity
$\lim_{k\rightarrow+\infty}\int_{\S^2}|A_{f_k}|^2\,d\mu_{f_k} 
= \sum_{i = 0,1,2} \int |A_{f^i}|^2\,d\mu_{f^i}$. \vspace{2mm}
\item[{\rm (7)}] We have 
$\lim_{k\rightarrow \infty}\big(\log t_k:\log\lambda_k:\log r_k\big)=
(1:1:2)$, and\, $\lim_{k\rightarrow \infty}\Lambda_k/\lambda_k$ exists.
\end{itemize}
\end{thm}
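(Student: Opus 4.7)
My approach is a bubble-tree analysis for the degenerating conformal maps $f_k$, combined with the Euler--Lagrange equation \eqref{eqeuler} to identify the neck as a catenoid and to pin down the scales and the Lagrange multiplier.

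\emph{Outer and inner bubbles.} First I rescale so that $\mu(f_k)=1$, which gives (1); then $\V(f_k)=\sigma_k/(6\sqrt{\pi})\to 0$. Starting from the double-sphere varifold limit of \cite{Schy12}, the energy bound $\W(f_k)\to 8\pi$ together with H\'elein's convergence lemma and M\"uller--Sverak immersion theory yield smooth subconvergence of $f_k$ on $\S^2\setminus\Sigma$ for a finite concentration set $\Sigma$. Because the limit varifold has multiplicity exactly two and each concentration point absorbs at least $4\pi$ of Willmore energy, $|\Sigma|=1$; a M\"obius reparametrization places it at $N$ and a translation gives $f^0(N)=0$, proving (2). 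Choosing $r_k\to 0$ so that $f_k^1(z)=f_k\circ\pi_S^{-1}(r_k z)$ has non-degenerate conformal factor on $B_1$ extracts the second bubble $f^1$, which is (3). Combining the $4\pi+4\pi$ energy split, the Li--Yau inequality, and the varifold constraint then forces both $f^0$ and $f^1$ to be conformal equivalences to the same round sphere $\S$ of area $1/2$, giving (4).

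\emph{Catenoid neck and energy identity.} In the intermediate range $r_k\ll|\xi|\ll 1$ in $\pi_S$-coordinates, neither bubble captures the leading behavior of $f_k$, so I choose $t_k,\lambda_k\to 0$ such that $f_k^2(z)=\lambda_k^{-1}f_k\circ\pi_S^{-1}(t_k z)$ has a nontrivial smooth limit $f^2$ on $\R^2\setminus\{0\}$. Rescaling \eqref{eqeuler} to the neck scale, the right-hand side acquires a factor $\lambda_k^3$ and (given the $\Lambda_k$-control verified below) vanishes in the limit, so $f^2$ is minimal; finite total curvature plus two embedded ends identifies $f^2$ as a catenoid via Schoen's classification, proving (5). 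The energy identity (6) follows from a standard three-annulus / no-neck iteration for Willmore-type equations, showing that $\int_{\mathrm{neck}}|A|^2\,d\mu_{f_k}\to 0$ on each dyadic subannulus between the three scales.

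\emph{Scaling relations and Lagrange multiplier.} The $(1{:}1{:}2)$ ratio comes from matched asymptotic expansions. At the catenoid's outer end ($z\to\infty$), $f^2(z)$ grows linearly in $z$, while $\lambda_k^{-1}f_k\circ\pi_S^{-1}(t_k z)\approx \lambda_k^{-1}t_k\,Df^0(N)\,z$; matching forces $\lambda_k/t_k\to c_0\neq 0$, i.e.\ $\log\lambda_k/\log t_k\to 1$. At the inner end ($z\to 0$), $|f^2(z)|\sim 1/|z|$ whereas $\lambda_k^{-1}|f^1(t_k z/r_k)|\sim \lambda_k^{-1}r_k/(t_k|z|)$ (using $|f^1(w)|\sim 1/|w|$ at infinity for the stereographic parametrization of $\S$), forcing $r_k\sim\lambda_k t_k\sim t_k^2$, so $\log r_k/\log t_k\to 2$. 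For the limit of $\Lambda_k/\lambda_k$, I integrate \eqref{eqeuler} over $\S^2$: since $\int\Delta H\,d\mu=0$, the identity reduces to $\int|A^\circ|^2 H\,d\mu_{f_k}=12\sqrt{\pi}\,\Lambda_k+O(\Lambda_k\sigma_k)$. The left-hand side is concentrated in the neck (umbilic on the two sphere caps), and a detailed analysis of the deviation of $f_k$ from the exact catenoid there yields a contribution of order $\lambda_k$ with a specific nonzero limit determined by the catenoid geometry.

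\emph{Main obstacle.} The hardest step is the neck analysis: identifying the limit as precisely a catenoid (Schoen's classification), proving the no-neck energy identity via three-annulus estimates adapted to the constrained Willmore equation, and deriving the sharp $\Lambda_k/\lambda_k$ asymptotic. The last point in particular requires quantitative control of the deviation of $f_k$ from its bubbling profiles in the neck region, which is genuinely delicate because the Lagrange multiplier couples all three scales.
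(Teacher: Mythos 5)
Your treatment of items (1)--(6) follows essentially the same route as the paper (Chen--Li bubble tree, identifying both bubbles as conformal equivalences to a round sphere via the varifold limit and volume constraint, a blowup at the minimal-diameter circle to produce a complete embedded minimal surface with total curvature $\geq -4\pi$, and the Osserman/Schoen classification to single out the catenoid). For the energy identity the paper does not run a three-annulus iteration; it simply observes that the catenoid contributes exactly $8\pi$, so the sum of the three pieces already saturates the limit $24\pi$, which is more economical than what you propose but leads to the same conclusion.

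For item (7) your approach is genuinely different from the paper's, and both halves of it have gaps. For the scale ratio you argue by matched asymptotics, claiming that the catenoid's linear growth at its outer end forces $t_k/\lambda_k\to c_0\neq 0$ and similarly that the inner end forces $r_k\sim t_k^2$. These are \emph{stronger} statements than what is being proved: the theorem only asserts $\lim(\log t_k:\log\lambda_k:\log r_k)=(1:1:2)$, which permits, e.g., $\lambda_k = t_k\,|\log t_k|$. The matching you describe would require the convergence $f^2_k\to f^2$ on compacta of $\C\setminus\{0\}$ and the convergence $f_k\to f^0$ near $N$ to hold uniformly on an \emph{overlapping} range of scales, and that overlap is precisely what is not automatic and not supplied by your sketch. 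The paper instead controls the radial average of the conformal factor, $u_k^\ast(r)$, via the Liouville equation and the smallness of $\int |A_{f_k}|^2$ over the neck annulus, obtaining $\varrho\,|(u_k^\ast)'(\varrho)|<2\ve$; integrating this produces exactly a $\log$-loss and therefore exactly the log-ratio statement, with no claim of a finite limit for $t_k/\lambda_k$.

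For the multiplier asymptotic, your plan of integrating the Euler--Lagrange equation over $\S^2$ and isolating $\int |A^\circ|^2 H\,d\mu_{f_k}$ runs into a real difficulty: the neck converges to a \emph{minimal} catenoid, so $H\to 0$ at the blowup scale, and the leading contribution to that integral vanishes. The needed order-$\lambda_k$ term is therefore a genuine next-order correction coming from the deviation of $f_k$ from an exact minimal surface, and you give no mechanism for computing it or for showing it is nonzero. The paper instead rewrites the equation in the Rivi\`ere conservation-law form $\div Q[f_k]=\tfrac{3\Lambda_k}{2}S[f_k]$, conjugates by the inversion $I_k(y)=\lambda_k y/|y|^2$, and evaluates the flux $\int_{\partial D_{t_k}}\langle Q[I_k\circ f_k],\partial_r\rangle$. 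That flux converges to the (nonzero) residue of the catenoid, while the bulk integral on $\S^2\setminus D_{t_k}$ converges to an explicit integral over $f^0$, and this directly yields $\lim\Lambda_k/\lambda_k$ without any estimate of $H$ on the neck. You should replace the global integration-of-the-EL-equation argument by this flux computation (or supply the missing neck analysis quantifying the departure from minimality to next order).
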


The geometric problem we adress here is partially motivated
by a model for cell membranes due to Helfrich \cite{Helf73}. 
In the case of topological spheres the Helfrich energy becomes
$$
F_{SC} = \frac{\varkappa}{2} \int_{\S^2} (H-C_0)^2\,d\mu_g
+ 4\pi \varkappa_G,
$$
where $\varkappa,\varkappa_G$ and $C_0$ are constants. 
The Helfrich energy essentially reduces to the Willmore 
energy when the spontaneous curvature $C_0$ is zero. 
Although this simplified case may not correspond to real 
membranes, it was studied by several authors. In the 
axially symmetric case the minimizers are described
by ordinary differential equations which were studied by 
numerical approximations, see e.g. \cite{Can70,Lip91,BLS91}.
In particular Berndl, Lipowsky and Seifert \cite{BLS91} 
distinguished three kinds of shapes depending on the 
isoperimetric ratio $\sigma$, which they call the reduced 
volume: the prolate-dumbbell, the oblate-discocyte
and the stomatocyte type. The stomatocyte parameter
range is indicated as $0 < \sigma \leq 0.591$, and it 
is noted that a neck develops as $\sigma \to 0$. Here
we provide a rigorous analysis for this neck formation,
without assuming axial symmetry.

\section{Bubble Limits of Minimizers $f_k$ for Isoperimetric Ratio $\sigma_k \to 0$}
Let $\Sigma$ be a Riemann surface. We denote by $W^{2,2}_{conf}(\Sigma,\R^3)$
the set of $f \in W^{2,2}_{loc}(\Sigma,\R^3)$, such that in any conformal
chart the metric has the form $g_{ij} = e^{2u} \delta_{ij}$
where $u \in L^\infty_{loc}$ \cite{KL12}. More generally,
a branched conformal immersion on $\Sigma$ is a map 
$f \in W^{2,2}_{conf}(\Sigma\backslash {\mathcal S},\R^3)$ 
where ${\mathcal S}$ is finite and 
$$
\mu_g(\Omega \backslash {\mathcal S}) 
+ \int_{\Omega \backslash {\mathcal S}}|A|^2\,d\mu_g < \infty
\quad \mbox{ for any } \Omega \subset\!\! \subset \Sigma.
$$
Any branched conformal immersion is in $W^{2,2}_{loc}(\Sigma,\R^3)$.
At each $p \in \Sigma$ the map has multiplicity $m_p \in \N_0$ 
and behaves like $z^{m_p}$ in a weak sense.\\ 
\\
In this section we summarize results from \cite{CL14} about the 
convergence of a sequence $f_k:\S^2 \to \R^3$ of (branched) 
conformal immersions with bounded Willmore energy and area. We 
first recall the notion of a bubble: a sequence $z_k \in \C$, 
$r_k > 0$, with $z_k \to 0$ and $r_k \to 0$ is called a 
bubble of $f_k$ at $p \in \S^2$, if the following holds: 
in some local conformal coordinates with $p$ corresponding  
to $z = 0$ the sequence $\hat{f}_k(z) = f_k(z_k + r_k z)$
converges locally $W^{2,2}$-weakly on $\C$, away from 
finitely many points, to a nonconstant map $f:\C \to \R^3$.
It follows that when composed with stereographic projection, 
the blowup limit $f$ is a branched conformal immersion from 
$\S^2$ to $\R^3$ with finite area and finite Willmore energy. 
This notion of a bubble is clearly independent of the 
coordinates. Two bubbles $(z_k^i,r_k^i)$, $i =  1,2$, are 
different if they concentrate at different points 
$p_i \in \S^2$, or if their scales are different in the 
sense that
$$
\frac{r_k^1}{r_k^2}+\frac{r_k^2}{r_k^1}
+\frac{|z_k^1-z_k^2|}{r_k^1+r_k^2} \rightarrow \infty.
$$
To construct a limit of the sequence $f_k$ we first observe 
the diameter of $f_k(\S^2)$ is bounded. In the case 
when $f_k$ is a smooth immersion, the diameter estimate
was proved by Simon \cite{S} using a monotonicity formula.
For a $W^{2,2}$-conformal immersion one can approximate
by smooth immersions to get the result. If the $f_k$ are 
merely branched conformal immersions one considers
the image $2$-varifold as in \cite{KS04}. Now
after translating we can assume 
$$
f_k(\S^2) \subset \overline{B_R(0)} \quad \mbox{ for all } k.
$$
Together with the Willmore energy bound, this shows that $f_k$ 
belongs to the class ${\mathcal F}^2(\S^2,g_{\S^2},R)$ defined 
in \cite[Sec.2]{CL14}. Passing to a subsequence, we can also 
arrange that 
$$
\|A_{f_k}\|_{g_k}^2\,d\mu_{g_k} \to \alpha \quad \mbox{ in } C^0(\S^2)'.
$$
Depending on a constant $\ve_0 > 0$, which will be chosen
appropriately, the finite concentration set is defined by 
$$
{\mathcal S} = \{p \in \S^2:\alpha(\{p\}) \geq \ve_0\}.
$$
Proposition 2.1 in \cite{CL14} says that there is a subsequence
such that $f_k \to f^0$ weakly in $W^{2,p}$ for any $p < 2$ and 
strongly in $W^{1,q}$ for any $q < \infty$, away from ${\mathcal S}$.
In particular we can assume $Df_k \to Df^0$ pointwise 
almost everywhere. By H\'{e}lein's convergence result, 
see e.g. \cite[Thm. 5.1]{KL12}, we get for a subsequence 
the following alternative: 
\begin{itemize}
\item[{\rm (a)}] either $u_k$ is locally bounded in 
$\S^2\backslash \mathcal{S}$, and $f_k$ converges $W^{2,2}$-weakly 
away from $\mathcal{S}$ to a branched conformal immersion
$f:\S^2 \to \R^3$. 

\item[{\rm (a)}] or $u_k \to -\infty$ and $f_k \to {\rm const.}$ 
locally uniformly on $\S^2\backslash \mathcal{S}$.
\end{itemize}

To rule out the case that the limit $f^0$ is constant, we will 
apply a general bubbling result from \cite[Thm. 2.8]{CL14}.
In the case of surfaces of type $\S^2$ it says the 
following.

\begin{lem} \label{chenli} Let $f_k:\S^2 \to \R^3$  be a sequence of (branched)
conformal immersions with bounded Willmore energy and area.
Then there exists a branched conformal immersion $f^0:\S^2 \to \R^3$
(possibly constant) and a set of different bubbles $(z^i_k,r^i_k)$, $1 \leq i \leq N$
(possibly empty), such that the following holds for a subsequence:
\begin{eqnarray}
\label{eqchenli1}
\lim_{k \to \infty} \int_{\S^2} d\mu_{f_k} & = & 
\sum_{i=0}^N \int_{\S^2} d\mu_{f^i},\\
\label{eqchenli2}
\limsup_{k \to \infty} \W(f_k) & \leq & \sum_{i=0}^N \W(f^i).
\end{eqnarray}
\end{lem}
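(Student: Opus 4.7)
The plan is to build on the weak convergence already set up before the lemma. By Proposition~2.1 of \cite{CL14} combined with H\'{e}lein's alternative, after passing to a subsequence $f_k \to f^0$ locally $W^{2,2}$-weakly on $\S^2 \setminus {\mathcal S}$, with $f^0$ either a branched conformal immersion $\S^2 \to \R^3$ or a constant. If $\mathcal{S} = \emptyset$ the conformal factor estimates upgrade this to local $W^{2,2}$-strong convergence on all of $\S^2$, so \eqref{eqchenli1} and \eqref{eqchenli2} hold trivially with $N=0$. The content of the lemma is therefore to organize the loss of mass and energy at each $p \in {\mathcal S}$ into a finite list of bubbles.

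At each $p \in \mathcal{S}$ I would extract a first bubble by the standard smallest-scale procedure: in a conformal chart centred at $p$, pick $z^1_k \to 0$ and $r^1_k \to 0$ minimal such that $\int_{D_{r^1_k}(z^1_k)} |A_{f_k}|^2\,d\mu_{g_k} = \ve_1$ for a fixed $\ve_1 \in (0,\ve_0)$. The rescaled sequence $\hat f_k(z) = f_k(z^1_k + r^1_k z)$ is still conformal with bounded area and Willmore energy; by minimality of $r^1_k$ it has small $L^2$ curvature on every unit disk, so H\'{e}lein's theorem plus the normalization $\int_{D_1(0)}|A_{\hat f_k}|^2 = \ve_1$ rule out the constant alternative and produce a nonconstant branched conformal limit $f^1 : \C \to \R^3$, which extends to a branched conformal immersion of $\S^2$. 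Iterating this at the remaining concentration points of the rescaled sequences yields a finite tree of bubbles; the process terminates because each bubble consumes at least $\ve_0$ of the bounded defect measure $\alpha$. A hierarchical ordering based on the scale-ratio criterion in the definition of different bubbles lets one extract a pairwise distinct list: any two candidate scales are either comparable (and then give the same bubble up to a M\"obius reparametrization) or their ratio diverges (and then one sits strictly inside a neck of the other).

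The main obstacle is the \emph{neck analysis} behind the two identities. Once the base map and all $N$ bubbles are fixed, $\S^2$ decomposes, up to small exceptional sets, into a region where $f_k \to f^0$ smoothly, $N$ bubble regions where the rescaled sequences converge smoothly to $f^1,\ldots,f^N$, and annular necks connecting these pieces. On each neck, organised as a union of dyadic annuli, the $L^2$-mass of $A_{f_k}$ is smaller than any prescribed $\delta$ for $k$ large, since otherwise one could extract a further bubble inside the neck. The small-energy estimates of H\'{e}lein for the conformal factor together with Simon's monotonicity formula then force both the area and the Willmore integral on each dyadic piece to be summably small, driving the total neck contribution to zero. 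Adding the contributions from the base, each bubble, and the necks gives equality in \eqref{eqchenli1} and the lower semicontinuity inequality \eqref{eqchenli2}. This is precisely the content of Theorem~2.8 of \cite{CL14} applied to the compact Riemann surface $\S^2$, so in practice I would invoke that result once the sequence has been placed in the class ${\mathcal F}^2(\S^2,g_{\S^2},R)$, as done above.
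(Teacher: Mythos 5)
The paper does not prove this lemma; it records Theorem~2.8 of \cite{CL14} specialized to $\S^2$, and your proposal correctly identifies and ultimately invokes the same result, so the two approaches coincide. The auxiliary sketch you give of smallest-scale bubble extraction, finite termination, and neck decomposition is broadly faithful to the machinery in \cite{CL14}. One caution on terminology and mechanism: calling \eqref{eqchenli2} a lower semicontinuity inequality is misleading. Lower semicontinuity of $\W$ applied on the base and bubble regions, combined with the necks contributing nonnegatively, yields $\liminf_{k}\W(f_k) \geq \sum_{i}\W(f^i)$, i.e.\ the limit configuration can only \emph{lose} Willmore energy; the stated inequality $\limsup_{k}\W(f_k) \leq \sum_{i}\W(f^i)$ points the opposite way and encodes that no Willmore energy concentrates in the degenerating necks. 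That no-concentration statement is precisely the quantization step you rightly flag as the main obstacle, and it is the nontrivial content of the cited theorem, not a consequence of semicontinuity.
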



\begin{lem}\label{lemmatwospheres} 
Let $f_k:\S^2 \to \R^3$ be minimizers for the Willmore 
energy with prescribed isoperimetric ratio $\sigma_k = \sigma(f_k) \to 0$.
After composing with suitable M\"obius transformations, the following holds:
\begin{itemize}
\item[{\rm (a)}] $f_k$ converges to a conformal equivalence $f^0:\S^2 \to \S$,
where $\S$ is a round sphere of area $1/2$ passing through the origin. 
\item[{\rm (b)}] $f_k$ has a bubble $f^1:\C \to \S$ which extends to a 
                 conformal equivalence $f^1:\hat{\C} \to \S$ with orientation
                 opposite to $f^0$.
\end{itemize}
Moreover, $f_k$ has no further bubbles. 
\end{lem}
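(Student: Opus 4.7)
The plan is to apply Lemma~\ref{chenli} to the minimizers $f_k$, then combine the resulting bubble decomposition with Schygulla's varifold limit theorem \cite{Schy12} and the Li--Yau inequality to identify each non-constant piece as a round sphere.

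After rescaling so that $\mu(f_k)=1$ and translating, Schygulla's theorem gives varifold convergence to a round sphere $\S$ of multiplicity two, forcing $\mu(\S)=1/2$. Lower semicontinuity of Willmore under varifold convergence yields $\liminf_k \W(f_k) \geq \W(2\S) = 8\pi$. For the matching upper bound I exploit that $f_k$ is a minimizer: comparing with an explicit family of two round spheres of area $1/2$ joined by a shrinking catenoidal neck (with Willmore energy tending to $8\pi$ and isoperimetric ratio tending to $0$) gives $\limsup_k \W(f_k) \leq 8\pi$, so $\W(f_k) \to 8\pi$.

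Now apply Lemma~\ref{chenli} to extract a base limit $f^0$ and bubbles $f^1,\ldots,f^N$. By Li--Yau, every non-constant branched conformal immersion of $\S^2$ has $\W \geq 4\pi$, with equality iff it is a round sphere; together with \eqref{eqchenli2} this gives $8\pi \leq \sum_i \W(f^i)$, so at most two of the $f^i$ are non-constant, and each such piece is a conformal equivalence onto some round sphere. Since the varifold limit is supported on $\S$, each non-constant image lies in $\S$, so each non-constant $f^i$ is a conformal equivalence to $\S$. The area identity \eqref{eqchenli1} then reads $\sum_i \mu(f^i) = 1 = 2\mu(\S)$, forcing exactly two non-constant pieces in the decomposition.

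If $f^0$ is constant, both non-constant pieces are bubbles, and I compose $f_k$ with a M\"obius reparametrization of $\S^2$ that zooms to the scale of the first bubble, promoting $f^1$ to the new base and leaving a single remaining bubble. Thus I may assume $f^0$ is a conformal equivalence to $\S$ with one bubble $f^1$, also a conformal equivalence to $\S$. The opposite orientation claimed in (b) follows from $\V(f_k) \to 0$ (since $\sigma_k\to 0$ and $\mu(f_k)=1$): a standard additivity argument for the signed volume across the bubble decomposition gives $\V(f^0) + \V(f^1) = 0$, which rules out same orientation, as then both signed volumes would have the same sign. Finally, translating $f_k$ by $-f_k(z_k^1)$ sends the concentration point of the bubble to $0$, so $\S$ passes through the origin. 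The main obstacle is the sharp upper bound $\limsup_k \W(f_k) \leq 8\pi$, which requires constructing the catenoidal-neck test surface and estimating its Willmore energy and isoperimetric ratio carefully; everything else is a bookkeeping exercise once the pieces are pinned down as round spheres via Li--Yau.
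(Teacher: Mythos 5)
Your overall strategy matches the paper's: start from Schygulla's varifold limit $2\S$, apply the Chen--Li bubble decomposition (Lemma~\ref{chenli}), identify the pieces, and get opposite orientation from $\V(f_k)\to 0$. The genuine difference is how you identify the limit pieces: you invoke Li--Yau rigidity ($\W=4\pi$ iff round sphere), whereas the paper shows that $f^0,f^1$ have image in $\S$ (via the monotonicity formula, which upgrades varifold to Hausdorff convergence), concludes they are weakly harmonic maps into $\S$, hence holomorphic or anti-holomorphic, and classifies them by degree. Either route can be made to work, but yours currently has gaps.

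The first gap is a sign error with real consequences. You write that \eqref{eqchenli2} gives $8\pi\le\sum_i\W(f^i)$ ``so at most two of the $f^i$ are non-constant.'' That implication runs the wrong way: a lower bound on the sum gives ``at least,'' not ``at most.'' What you need is $\sum_i\W(f^i)\le 8\pi$, which requires lower semicontinuity of $\W$ on the bubble regions (i.e.\ $\sum_i\W(f^i)\le\liminf_k\W(f_k)$), not the Chen--Li inequality \eqref{eqchenli2}. Combined with $\liminf_k\W(f_k)\geq 8\pi$ and \eqref{eqchenli2}, one gets the equality $\sum_i\W(f^i)=8\pi$; you should state this explicitly.

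The second and more substantial gap is that you never rule out the configuration where there is no bubble at all and $f^0$ alone accounts for everything, with $\W(f^0)=8\pi$ and $\mu(f^0)=1$ (for instance a degree-two conformal cover of $\S$). Li--Yau rigidity only pins down pieces with $\W$ \emph{exactly} $4\pi$; a single piece with $\W=8\pi$ is consistent with $\sum\W(f^i)=8\pi$ and with $\sum\mu(f^i)=1$, so your area identity does not ``force two pieces'' --- the reasoning is circular, since it presupposes each non-constant piece is already degree one. The paper closes exactly this hole: if $\mu(f^0)=1=\mu(f_k)$ then $f_k\to f^0$ strongly in $W^{1,2}$, so $\V(f^0)=\lim\V(f_k)=0$, contradicting that a degree-two cover of $\S$ has $\V=2\V(\S)\ne 0$. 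You need this (or an equivalent) step. Finally, note that to place the round-sphere pieces on $\S$ (rather than some other round sphere) one needs more than varifold convergence; the paper uses the monotonicity formula to get Hausdorff convergence, which is worth saying if you take your route. Also, $\W(f_k)\to 8\pi$ need not be re-derived by constructing a catenoidal comparison surface; it is contained in \cite{Schy12}.
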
 

\proof In \cite{Schy12} Schygulla proved that up to translations
$f_k(\S^2) \to  2\,\S$ as varifolds, where $\S$ is as stated. 
The monotonicity formula
in \cite{S} implies that the convergence is also in Hausdorff distance.
We claim that when applying Lemma \ref{chenli}, the map $f^0$ 
can be assumed nonconstant. 
Otherwise by (\ref{eqchenli2}) there is at least one bubble 
$f^1:\S^2 \to \R^3$, having area $\mu(f^1) > 0$. Up to rotation,
the bubble concentrates at the north pole, which means 
$$
f_k(\pi_S^{-1}(z_k + r_k z)) \to f^1(z) \quad 
\mbox{ $W^{2,2}$-weakly locally in $\R^2$, away from a finite set.}
$$
The maps $\pi_S^{-1} \circ A_k \circ \pi_S$, $A_k(z) = z_k + r_k z$,
extend to M\"obius transformations of $\S^2$, and 
$$
f_k(\pi_S^{-1} \circ A_k \circ \pi_S) \to f^1 \circ \pi_S \neq {\rm const.}
$$
Thus we obtained a nonconstant limit $f^0: = f^1 \circ \pi_S:\S^2 \to \S$, 
proving the claim. Now $f^0$ is a branched conformal immersion, in 
particular $f^0 \in W^{2,2} \cap W^{1,\infty}(\S^2,\R^3)$. We compute 
in local complex coordinates on $\S^2$, writing
$\langle \partial_i f^0,\partial_j f^0 \rangle = e^{2u_0} \delta_{ij}$,
$$
\langle \Delta f^0,\partial_j f^0 \rangle = 
\partial_i \langle \partial_i f, \partial_j f \rangle 
- \frac{1}{2} \partial_j |\partial_i f|^2
= \partial_i (e^{2u_0} \delta_{ij}) - \frac{1}{2} \partial_j (2 e^{2u_0})
= 0.
$$
So $f^0:\S^2 \to \S$ is weakly harmonic, and hence
smooth by standard regularity theory \cite[Thm. 2.5.1]{Jos91}. 
Orienting $\S^2$ and $\S$ by their exterior normals, we conclude 
that $f^0:\S^2 \to \S$ is either holomorphic or anti-holomorphic.
This implies $\mu(f^0) = d\, |\S|$ for $d \in \{1,2\}$.\\
\\
Assume by contradiction $d = 2$. Then $\mu(f^0) = 1 = \mu(f_k)$, which 
implies $f_k \to f^0$ in $W^{1,2}(\S^2,\R^3)$ by conformality. But the 
volume functional is continuous under $W^{1,2}$-convergence 
of uniformly bounded maps, and so 
$\V(f^0) = \lim_{k \to \infty}  \V(f_k) = 0$, a contradiction.
Thus we have $d = 1$ and $f^0:\S^2 \to \S$ is a conformal 
automorphism. Now by (\ref{eqchenli1}) there exists a
bubble $f^1:\S^2 \to \S$. The argument above shows that this 
is again a conformal equivalence. Now for any collection
of different bubbles we have the lower semicontinuity
$$
\mu(f^0) + \sum_{i=1}^N \mu(f^i) \leq \liminf_{k \to \infty} \mu(f_k) = 1.
$$
Since $\mu(f^0) + \mu(f^1) = 1$, a further bubble would have zero 
area and again be conformal, hence constant. This is ruled out
by the definition of bubble.\\
\\
To show that $f^{0}$ and $f^1$ are oppositely oriented, we compute 
with $\omega = \frac{1}{3} x \llcorner dx^1 \wedge dx^2 \wedge dx^3$ 
$$
\int_{\S^2 \backslash B_\varrho(N)} f_k^\ast \omega
+ \int_{D_{\frac{1}{\varrho}(0)}} \hat{f}_k^\ast \omega
= \int_{\S^2 \backslash A_{k,\varrho}} f_k^\ast \omega 
= \V(f_k) - \int_{A_{k,\varrho}} f_k^\ast \omega,
$$
where 
$A(k,\varrho) =  \S^2 \backslash 
\big(B_\varrho(N) \cup \pi_S^{-1}(D_{\frac{r_k}{\varrho}}(z_k)\big)\big)$.
The error term is estimated by 
$$
\Big|\int_{A_{k,\varrho}} f_k^\ast \omega\Big| \leq 
C \int_{A_{k,\varrho}} d\mu_{f_k}
= C \,\Big(1- \int_{\S^2 \backslash B_\varrho(N)} d\mu_{f^k}
- \int_{D_{\frac{1}{\varrho}}(0)} d\mu_{\hat{f}_k}\Big).
$$
Letting $k \to \infty$ we obtain using $\lim_{k \to \infty} \V(f_k) = 0$
\begin{eqnarray*}
\Big| \int_{\S^2 \backslash B_\varrho(N)} (f^0)^\ast \omega 
+ \int_{D_{\frac{1}{\varrho}}(0)} (f^1)^\ast \omega \Big| & \leq &
\limsup_{k \to \infty} \Big|\int_{A_{k,\varrho}} f_k^\ast \omega\Big|\\
& \leq & C\Big(1 - \int_{\S^2 \backslash B_\varrho(N)} d\mu_{f^0}
- \int_{D_{\frac{1}{\varrho}}(0)} d\mu_{f^1}\Big).
\end{eqnarray*}
Letting $\varrho \to 0$ proves our claim.
\endproof

\section{Estimates for Critical Points with Prescribed
Isoperimetric Ratio}

The first variation of the Willmore energy at $f:\Sigma \to \R^3$ 
in direction $\phi$ is given by
\begin{eqnarray}
\nonumber
\delta\W(f)\phi & = &
\frac{1}{2} \int_\Sigma \langle \vec{H},\Delta_g\phi \rangle\,d\mu_g\\
\label{eqEL1} 
&& - \int_\Sigma g^{\alpha \beta} g^{\lambda \mu}
\langle \vec{H},A_{\alpha \lambda} \rangle \langle \partial_\beta f,\partial_\mu\phi \rangle\,d\mu_g\\
\nonumber
&& + \frac{1}{4} \int_\Sigma |\vec{H}|^2 g^{\alpha \beta} \langle \partial_\alpha f,\partial_\beta \phi \rangle\,d\mu_g.
\end{eqnarray}
Writing $\vec{H} = H \vec{n}$ and $\phi = \varphi \vec{n}$, we get by 
partial integration for $\phi$ having compact support
\begin{equation}
\label{eqwillmoreoperator}
\delta\W(f)\phi = \frac{1}{2} \int_\Sigma
(\Delta_g H + |A^\circ|^2 H) \varphi \,d\mu_g.
\end{equation}
The first variation of the isoperimetric ratio is 
\begin{eqnarray}
\nonumber
\delta\sigma(f)\phi & = & \sigma(f)\Big(\frac{1}{\V(f)} \int_\Sigma \langle \phi,\vec{n} \rangle\,d\mu_g 
- \frac{3}{2 \mu(f)} \int_\Sigma \langle df,d\phi \rangle_g\,d\mu_g \Big) \\
\label{eqfirstvariationratio}
& = & \sigma(f)\Big(\frac{1}{\V(f)} \int_\Sigma \varphi \,d\mu_g
+ \frac{3}{2 \mu(f)} \int_\Sigma H \varphi\,d\mu_g\Big). 
\end{eqnarray}
The Euler Lagrange operators, i.e. the $L^2$ gradients, of the functionals are thus
\begin{eqnarray} 
\label{eqwillmoregradient}
\nabla \W(f) & = & \frac{1}{2}\big(\Delta_g \vec{H} + |A^\circ|^2 \vec{H}\big),\\
\label{eqisopgradient}
\nabla \sigma(f) & = & \sigma(f) \Big(\frac{1}{\V(f)} \vec{n} + \frac{3}{2\mu(f)} \vec{H}\Big).
\end{eqnarray}
For $\lambda > 0$ we have the scaling $\nabla \W(\lambda f) = \lambda^{-3} \nabla \W(f)$
and also $\nabla \sigma(\lambda f) = \lambda^{-3} \nabla \sigma(f)$.\\
\\
Note that $\vec{H} = H \vec{n}$ with $\vec{n}$ the exterior 
normal implies $H = -2$ for $\S^2$. 

\begin{lem} Let $f:\S^2 \to \R^3$ be a smoothly embedded
Willmore minimizer with prescribed isoperimetric ratio 
$\sigma(f) \in (0,1)$. Then 
\begin{equation}
\label{eqmultiplier}
\delta \W(f) = \Lambda\, \delta \sigma(f) \quad \mbox{ for some }\Lambda \in \R.
\end{equation}
\end{lem}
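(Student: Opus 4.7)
This is a standard Lagrange multiplier statement, and the only substantive point is the nondegeneracy of the constraint at $f$, i.e.\ that $\delta\sigma(f) \not\equiv 0$; everything else will reduce to a routine implicit function theorem argument, so the main (and only nontrivial) obstacle is this nondegeneracy step. To establish it, I would suppose for contradiction that $\delta\sigma(f)\phi = 0$ for every smooth $\phi:\S^2\to\R^3$. Restricting to normal variations $\phi = \varphi\vec{n}$ and using $\sigma(f) > 0$, formula (\ref{eqfirstvariationratio}) forces
$$
\int_{\S^2}\Bigl(\frac{1}{\V(f)} + \frac{3}{2\mu(f)} H\Bigr)\varphi\,d\mu_g = 0\quad\text{for all } \varphi\in C^\infty(\S^2),
$$
so $H$ is constant on $\S^2$. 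Since $f$ is a smooth embedding of $\S^2$ into $\R^3$, Alexandrov's theorem then forces $f(\S^2)$ to be a round sphere, giving $\sigma(f) = 1$ and contradicting $\sigma(f) < 1$. This is precisely the place where the strict upper bound on $\sigma(f)$ enters.

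Having established nondegeneracy, fix $\phi_0:\S^2\to\R^3$ smooth with $\delta\sigma(f)\phi_0 \neq 0$. Given an arbitrary smooth $\phi$, I would form the two-parameter family $f_{s,t} = f + s\phi + t\phi_0$, which remains a smooth embedding for $(s,t)$ near the origin by openness of embeddings in the $C^1$-topology. The map $F(s,t) = \sigma(f_{s,t}) - \sigma(f)$ is smooth with $F(0,0) = 0$ and $\partial_t F(0,0) = \delta\sigma(f)\phi_0 \neq 0$, so the implicit function theorem yields a smooth function $t(s)$ with $t(0) = 0$, $F(s,t(s)) \equiv 0$, and $t'(0) = -\delta\sigma(f)\phi / \delta\sigma(f)\phi_0$.

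Finally, $f_{s,t(s)}$ is an admissible competitor with the same isoperimetric ratio as $f$, so minimality implies $\frac{d}{ds}\big|_{s=0}\W(f_{s,t(s)}) = 0$. The chain rule gives $\delta\W(f)\phi + t'(0)\,\delta\W(f)\phi_0 = 0$, and substituting $t'(0)$ produces
$$
\delta\W(f)\phi = \Lambda\,\delta\sigma(f)\phi,\qquad \Lambda := \frac{\delta\W(f)\phi_0}{\delta\sigma(f)\phi_0},
$$
with $\Lambda$ independent of $\phi$. This is the required identity, so the proof will be complete once the Alexandrov step is dispatched.
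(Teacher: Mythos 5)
Your proposal is correct and follows essentially the same route as the paper: assume $\delta\sigma(f)\equiv 0$, deduce from \eqref{eqfirstvariationratio} that $H$ is constant, invoke Alexandrov to conclude $f$ is a round sphere with $\sigma(f)=1$, contradicting $\sigma(f)<1$, and then apply the Lagrange multiplier rule. The paper simply cites the Lagrange multiplier rule after establishing nondegeneracy, whereas you spell out the implicit-function-theorem derivation; this is an unpacking of the same step, not a different approach.
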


\proof Assume that $\delta \sigma(f)$ is identically zero. Then
\begin{equation}
\label{eqdegenerate}
H \equiv - \frac{2\mu(f)}{3 \V(f)}.
\end{equation}
By Alexandrov's or by Hopf's theorem, $f$ parametrizes a round 
sphere which contradicts the assumption $\sigma(f) < 1$. Thus 
$\delta \sigma(f)\phi \neq 0$ for some $\phi$, and 
the claim follows by the Lagrange multiplier rule. 
\endproof

Scaling to $\mu(f) = 1$ yields $\sigma = 6 \sqrt{\pi}\, \V(f)$, and equation 
(\ref{eqmultiplier}) becomes 
\begin{equation} 
\label{EL2}
\frac{1}{2} \big(\Delta_g H + |A^\circ|^2 H\big) = 
\Lambda\,\sigma(f)\Big(\frac{6\sqrt{\pi}}{\sigma(f)} + \frac{3}{2} H\Big)
= \frac{3\Lambda}{2} \Big(4\sqrt{\pi} + \sigma H\Big).
\end{equation}




For $f:D \to \R^3$ conformal the first variation formula as given in 
\cite{Riv08} is
\begin{eqnarray}
\label{eqwillmorevariationconformal}
\delta \W(f)\phi = \int_D \langle Q[f],d\phi \rangle 
& \mbox{ for } &
Q[f] = \frac{1}{2} \big(\nabla\vec{H} - \frac{3}{2}H \,\nabla\vec{n} 
+ \frac{1}{2} \vec{H} \times \nabla^\perp \vec{n}\big),\\
\delta \sigma(f)\phi = \int_D \langle R[f],\phi \rangle
& \mbox{ for } &
R[f] = \frac{\sigma(f)}{\V(f)} \partial_1 f \times \partial_2 f
- \frac{3\sigma(f)}{2\mu(f)} \Delta f. 
\end{eqnarray}
The scaling is $Q[\lambda f] = \lambda^{-1} Q[f]$ and also
$R[\lambda f] = \lambda^{-1} R[f]$. The Euler Lagrange equation 
(\ref{eqmultiplier}), scaled to $\mu(f) = 1$, has the form 
\begin{equation}
\label{eqeulerlagrangeconform}
\div Q[f] = \frac{3\Lambda}{2} 
\big(4 \sqrt{\pi}\, \partial_1 f \times \partial_2 f
- \sigma \Delta f\big).
\end{equation}

By the results in Appendix 1  estimates for $|\nabla^m f|$
follow once the boundedness of the Lagrange multiplier 
$\Lambda$ is established.

\begin{lem}
Let $f_k:D \rightarrow \R^3$ be smooth conformal immersions, 
satisfying for $\sigma_k \to 0$ 
$$
\delta \W(f_k) \phi = \frac{3\Lambda_k}{2} 
\int_D \langle 4\sqrt{\pi}\vec{n} + \sigma_k \vec{H}_{f_k}, \phi \rangle\,d\mu_{f_k} 
\quad \mbox{ for all }\phi \in C^\infty_c(D,\R^3).
$$
Let $g_k = e^{2u_k} \delta$ be the induced metrics, and assume 
\begin{equation}
\label{eqmultiplierassumption}
\|A_{f_k}\|_{L^2(D)} + \|u_k\|_{W^{1,2} \cap L^\infty(D)} \leq C < \infty.
\end{equation}
Then the sequence $\Lambda_k$ is bounded. 
\end{lem}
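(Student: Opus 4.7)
The plan is to test the Euler--Lagrange identity against a fixed $\phi \in C_c^\infty(D,\R^3)$ for which the left-hand side $\delta\W(f_k)\phi$ is uniformly bounded in $k$ while the right-hand side is, up to a vanishing error, a nonzero multiple of $\Lambda_k$. Solving for $\Lambda_k$ then yields the desired bound. The argument breaks into three ingredients: a uniform $C^2$-dual bound on $\delta\W(f_k)$, extraction of a nondegenerate subsequential limit of the $f_k$, and a choice of $\phi$ adapted to that limit.

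First I would show that for any $\phi \in C_c^\infty(D,\R^3)$ one has $|\delta\W(f_k)\phi| \le C\|\phi\|_{C^2(D)}$, with $C$ independent of $k$. Inserting the conformal identities $\Delta_{g_k}\phi = e^{-2u_k}\Delta\phi$ and $d\mu_{g_k} = e^{2u_k}\,dx$ into \eqref{eqEL1}, the three resulting integrands are dominated by $|\vec{H}_{f_k}|\,|\Delta\phi|$, $|\vec{H}_{f_k}|\,|A_{f_k}|\,|\nabla\phi|$, and $|\vec{H}_{f_k}|^2\,|\nabla\phi|$, up to factors $e^{\pm u_k}$ which are harmless under \eqref{eqmultiplierassumption}. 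Cauchy--Schwarz together with the $L^2$-bound on $A_{f_k}$ and $|\vec{H}_{f_k}| \le \sqrt{2}\,|A_{f_k}|_g$ then delivers the estimate.

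Next I would extract a subsequential limit. After the translation $f_k \mapsto f_k - f_k(x_0)$ for some fixed $x_0 \in D$, which leaves the equation invariant, the bound on $A_{f_k}$ combined with $u_k \in L^\infty$ gives a uniform $W^{2,2}_{loc}(D,\R^3)$-bound on $f_k$. By Rellich--Kondrachov a subsequence converges weakly in $W^{2,2}_{loc}$ and strongly in $W^{1,p}_{loc}$ for every $p < \infty$ to some $f^\infty$, with $u_k \to u^\infty$ a.e. The two-sided $L^\infty$-bound on $u_k$ persists to $u^\infty$, so $e^{2u^\infty}$ is uniformly bounded away from zero and $f^\infty$ is a nondegenerate conformal immersion with continuous outward normal $\vec{n}_{f^\infty}$. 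Now I would fix an interior point $x_0$ of $D$ and take $\phi = \chi\,\vec{n}_{f^\infty}(x_0)$ with a nonnegative bump $\chi \in C_c^\infty(D)$ concentrated near $x_0$ so that, by continuity of $\vec{n}_{f^\infty}$,
\[
c_0 := \int_D \langle \vec{n}_{f^\infty},\phi\rangle\,d\mu_{f^\infty} > 0.
\]
Rewriting $\int_D \langle \vec{n}_{f_k},\phi\rangle\,d\mu_{f_k} = \int_D \langle \partial_1 f_k \times \partial_2 f_k,\phi\rangle\,dx$ and using strong $W^{1,p}_{loc}$-convergence, this integral tends to $c_0$; the $\sigma_k$-correction $\sigma_k\int\langle\vec{H}_{f_k},\phi\rangle\,d\mu_{f_k}$ is $O(\sigma_k)$ by Cauchy--Schwarz and vanishes. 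Inserting into the Euler--Lagrange identity yields
\[
|\Lambda_k|\cdot 6\sqrt{\pi}\,c_0\,(1+o(1)) \le |\delta\W(f_k)\phi| \le C\|\phi\|_{C^2},
\]
so $\Lambda_k$ is bounded.

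The main obstacle is ensuring that the limiting map $f^\infty$ is a genuine immersion, so that a test function producing a nonzero limiting integral exists. This hinges on the \emph{lower} bound $u_k \ge -C$ implicit in the $L^\infty$-assumption of \eqref{eqmultiplierassumption}: without it, the image could collapse to a point in the limit and the leading term $4\sqrt{\pi}\int\langle\vec{n},\phi\rangle\,d\mu_g$ on the right-hand side could not be bounded below uniformly, and no $\phi$ would separate $\Lambda_k$ from the small $\sigma_k$-error.
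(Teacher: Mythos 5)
Your proof is correct and follows essentially the same route as the paper's: estimate $\delta\W(f_k)\phi$ uniformly via (\ref{eqEL1}) and the bound (\ref{eqmultiplierassumption}), use the $L^1$ bound on $\vec H_{f_k}$ to kill the $\sigma_k$-term, extract a $W^{1,2}$-strong (subsequential) limit $f$ with positive area, and observe that $\int_D\langle\partial_1 f_k\times\partial_2 f_k,\phi\rangle\,dx$ converges to a nonzero quantity for suitable $\phi$. You spell out the choice of $\phi$ and the role of the two-sided $L^\infty$ bound on $u_k$ more explicitly than the paper, but the argument is the same.
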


\proof The first variation of the Willmore energy is estimated using 
(\ref{eqEL1}) and (\ref{eqmultiplierassumption}) by
$$
|\delta \W(f_k,\phi)| \leq C \Big(\|\Delta \phi\|_{L^2(D)} + \|D\phi\|_{L^\infty(D)}\Big).
$$
On the right hand side of the equation, we first note using (\ref{eqmultiplierassumption}) 
$$
\int_D |H_{f_k}|\,d\mu_{f_k} \leq \|H_{f_k}\|_{L^2(g_k)} \,\mu(f_k)^{1/2} \leq C.
$$
By (\ref{eqmultiplierassumption}) $f_k$ is bounded in $W^{2,2}(D,\R^n)$.
After passing to a subsequence, we have $f_k \to f$ strongly in $W^{1,2}(D,\R^n)$
and $\mu(f) = \lim_{k \to \infty} \mu(f_k) \in (0,\infty)$. Now 
$$
\pm \int_D \langle \vec{n}_k,\phi \rangle\,d\mu_{f_k}
= \int_D \langle \partial_1 f_k \times \partial_2 f_k,\phi \rangle\,dx
\to \int_D \langle \partial_1 f \times \partial_2 f,\phi \rangle\,dx.
$$
For suitable $\phi$ the right hand side is nonzero. 
The bound for $\Lambda_k$ follows. 
\endproof

{\bf Remark. }In the case $\sigma_k \to \sigma > 0$ the argument 
is modified as follows. We have 
$$
\int_D \langle \vec{H}_{f_k},\phi \rangle\,d\mu_{f_k} = 
\int_D \langle \Delta f_k, \phi \rangle\,dx  \to 
\int_D \langle \Delta f, \phi \rangle\,dx. 
$$
Thus one gets a bound for $\Lambda_k$ unless $f$ satisfies the equation
$$
-\Delta f  = \frac{4 \sqrt{\pi}}{\sigma} \partial_1 f \times \partial_2 f,
$$
i.e. $f$ is a conformally parametrized with constant mean curvature 
$H_f = \pm 4 \sqrt{\pi}/\sigma$.


\section{Construction of the Catenoid Neck} 


Let $f_k:\S^2 \to \R^3$ be the sequence of minimizers for 
$\sigma(f_k) = \sigma_k \to 0$ from Lemma \ref{lemmatwospheres}.
Thus $f_k \to f^0$ weakly in $W^{2,2}$ away from the north pole, and
$f_k$ has a bubble $f^1$ concentrating at the north pole. The maps 
$f^0,f^1:\S^2 \to \S$ are conformal diffeomorphisms, with opposite 
orientations. In particular 
$$
\W(f^0) = \W(f^1) = 4\pi \quad \mbox{ and } \quad 
\int_{\S^2} |A_{f^0}|^2\,d\mu_{f^0} 
= \int_{\S^2} |A_{f^1}|^2\,d\mu_{f^1} = 8\pi.
$$
On the other hand we know that 
$$
\W(f_k) \to 8\pi \quad \mbox{ and } \quad 
\int_{\S^2} |A_{f_k}|^2\,d\mu_{f_k} = 4 \W(f_k) - 8\pi \to 24\pi.
$$
In the following we study $f_k$ near the north pole using the 
conformal coordinates given by the projection $\pi_S$. 
For convenience we denote $f_k \circ \pi_S^{-1}$ again by $f_k$.
Thus putting $f_k^1(z) = f_k(z_k + r_k z)$, we have $f_k^1 \to f^1(z)$ 
locally smoothly on $\C$ up to a finite set, according to Lemma 
\ref{lemmatwospheres}. By rotating we can arrange that $z_k = 0$, 
which means
$$
f^1_k(z) = f_k(r_kz) \to f^1(z) \quad \mbox{ for all }z \in \C.
$$ 
Now by lower semicontinuity of the Willmore energy, we have 
\begin{eqnarray*}
4\pi & = & \lim_{r \searrow 0} \W(f^0,\S^2 \backslash D_r) 
       \leq \lim_{r \searrow 0} \liminf_{k \to \infty} \W(f_k,\S^2 \backslash D_r),\\
4\pi & = & \lim_{r \searrow 0} \W(f^1,D_{\frac{1}{r}}) 
       \leq \lim_{r \searrow 0} \lim_{k \to \infty} \W(f^1_k,D_{\frac{1}{r}})
       = \lim_{r \searrow 0} \lim_{k \to \infty} \W(f_k,D_{\frac{r_k}{r}}).
\end{eqnarray*}
We conclude that
\begin{eqnarray}
\label{minimal}
\lim_{r\to 0} \lim_{k\to \infty}\W(f_k,D_r\backslash D_\frac{r_k}{r}) & = & 0,
\quad \mbox{ and similarly}\\
\label{no.concentration}
\lim_{r \to 0} \lim_{k \to \infty} \int_{D_r\backslash 
D_\frac{r_k}{r}}|A_{f_k}|^2\,d\mu_{f_k} & \leq & 8\pi.
\end{eqnarray}
To find the neck we fix some number $\delta > 0$ and chose 
$t_k \in [\frac{r_k}{\delta},\delta]$ such that 
$$
{\rm diam\,}f_k(\partial D_{t_k}) 
= \min_{t \in [\frac{r_k}{\delta},\delta]} {\rm diam\,}f_k(\partial D_t)
=: \lambda_k > 0.
$$
Then we have $\lambda_k \to 0$, in fact for any $r \in (0,\delta]$ we have 
$$
\limsup_{k \to \infty} \lambda_k 
\leq \limsup_{k \to \infty} \,{\rm diam\,}f_k(\partial D_r) 
= {\rm diam\,}f^0(\partial D_r) \to 0 \mbox{ for } r \to 0.
$$
This implies that also $t_k \to 0$: if we had $t_k \to r > 0$
for a subsequence, then we would get
$$
\lambda_k = {\rm diam\,}f_k(\partial D_{t_k}) \to {\rm diam\,}f^0(\partial D_r) > 0.
$$
Similarly $t_k/r_k \to \infty$, because if $t_k/r_k \to R < \infty$
for a subsequence then 
$$
\lambda_k = 
{\rm diam\,}f_k(\partial D_{t_k}) 
= {\rm diam\,}f_k(\partial D_{r_k \cdot \frac{t_k}{r_k}}) 
\to {\rm diam\,}f^1(\partial D_R) > 0.
$$
Now we introduce the rescalings 
$$
f^2_k:\C \to \R^3,\, f^2_k(z) = \frac{f_k(t_k z) - f_k(t_k)}{\lambda_k}.
$$
After passing to a subsequence, we have convergence of the measures 
$$
\alpha_k^2 = \mu_{f^2_k} \llcorner |A_{f^2_k}|^2 \to \alpha 
\quad \mbox{ in }C^0_c(\R^2)'. 
$$
We show that  $\alpha_k^2$ do not concentrate away from the origin. 
Assume by contradiction that $\ve_1 = \alpha(\{p\}) > 0$ for some 
$p \in \C \backslash \{0\}$. We chose $R > 0$ such that
$\alpha(\overline{D_R(p)} \backslash \{p\}) < \ve_1/2$,
and define 
$$
r_k^2 = \inf\{r > 0: \alpha^2_k(D_r(z)) \geq \frac{\ve_1}{2} 
\mbox{ for some } z \in D_R(p)\}.
$$
As $\alpha(\{p\}) = \ve_1$ we have $r_k^2 \to 0$ as $k \to \infty$. Let 
$z_k \in \overline{D_R(p)}$ be a point where the infimum is 
attained. Then $z_k \to p$, and we have 
\begin{eqnarray*}
\frac{\ve_1}{2} = \alpha^2_k(D_{r^2_k}(z_k)) \geq \alpha^2_k(D_{r^2_k}(z))
\quad \mbox{ for all }z \in \overline{D_R(p)}.
\end{eqnarray*}
We rescale the sequence again, by putting
$$
f_k^3(z) = \frac{f_k^2(z_k+r_k^2 z) - f^2_k(z_k)}{\lambda^2_k} \quad
\mbox{ where } \lambda^2_k = {\rm diam\,} f^2_k(z_k+[0,r^2_k]). 
$$
We compute
\begin{eqnarray*}
f^3_k(z) & = & \frac{1}{\lambda^2_k}\Big(
\frac{f_k(t_k(z_k+r_k^2 z)) - f_k(t_k)}{\lambda_k} 
- \frac{f_k(t_k z_k) -f_k(t_k)}{\lambda_k}\Big)\\
& = & \frac{f_k(t_k z_k + t_k r^2_k z) - f_k(t_k z_k)}{\lambda_k \lambda^2_k}.
\end{eqnarray*}
Furthermore, using that 
$f_k^2 (z_k+\varrho) = \frac{1}{\lambda_k} (f_k(t_k(z_k+\varrho))-f_k(t_k))$, 
we see that 
$$
\lambda_k \lambda^2_k = {\rm diam\,} f_k(t_k z_k + [0,t_k r^2_k]) \leq C. 
$$
The last step used the diameter estimate in \cite{S}, given the Willmore 
energy bound and the area bound for $f_k$. Now by local curvature 
estimates, see Lemma \ref{lemmahigher}, and the convergence as in
Theorem \ref{theoremconvergence}, we obtain
$$
f_k^3 \to f^3 \quad  \mbox{ locally smoothly in }\C.
$$
We conclude that $f^3:\C \to \R^3$ is a complete minimal embedding 
with total Gau{\ss} curvature at least $-4\pi$. By the known
classification of minimal immersions $f^3$ is either an Enneper 
surface or a plane. But the Enneper is not embedded, and 
the plane is also ruled out because 
$$
\frac{\ve_1}{2} = \lim_{k \to \infty} \alpha^2_k(D_{r^2_k}(z_k)) 
= \lim_{k \to \infty} \int_{D_1(0)} |A_{f^3_k}|^2\,d\mu_{f^3_k}
= \int_{D_1(0)} |A_{f^3}|^2\,d\mu_{f^3}.
$$
This contradiction shows that the sequence $f^2_k:\C \to \R^3$ 
has no curvature concentrations away from the origin. Now by the 
small energy estimates, see Lemma \ref{lemmahigher} or \cite{Riv08}, 
we conclude that
$$
f^2_k \to f^2 \quad \mbox{ locally smoothly on } \C.
$$
By the normalization ${\rm diam\,}f_k^2(\partial D) = 1$ we have 
that $f^2$ is nonconstant, hence it is a complete minimal 
embedding. Moreover we have again
$$
\int_{\C} K_{f^2}\,d\mu_{f^2} \geq - 4\pi.
$$
We also know that ${\rm diam\,}f^2(\partial D_t) \geq 1$
for all $t > 0$, and hence $f^2$ must have ends at zero and 
infinity. Altogether this implies that $f^2$ parametrizes a 
catenoid, and we have the energy identity
\begin{equation}
\label{eqenergyidentity}
24\pi = \lim_{k \to \infty} \int_{\S^2} |A_{f_k}|^2\,d\mu_{f_k}
= \int_{\S^2} |A_{f^0}|^2\,d\mu_{f^0}
+ \int_{\C} |A_{f^1}|^2\,d\mu_{f^1}
+ \int_{\C\backslash \{0\}} |A_{f^2}|^2\,d\mu_{f^2}.
\end{equation}

The convergence of the sequence $f_k$ is subsumed as follows:\vspace{2mm}

\begin{itemize}
\item[ ] $f_k(z)$ converges to $f^0:\S^2 \stackrel{\sim}{\to} \S$ 
locally smoothly on $\S^2 \backslash \{N\}$,\vspace{2mm}
\item[ ] $f^1_k(z) = f_k(r_k z)$ converges to $f^1:\hat{\C} \stackrel{\sim}{\to} \S$ 
locally smoothly on $\C$, \vspace{2mm}
\item[ ] $f^2_k(z) = \frac{1}{\lambda_k} (f_k(t_kz)-f_k(t_k))$ 
         converges to a catenoid $f^2$ smoothly on $\C \backslash \{0\}$. \vspace{4mm}
\end{itemize}
Moreover $f_k(t_k) \to f^0(0) = f^1(\infty)$, and $f^0(0)=0$
is arranged by initial translation.

\section{Asymptotics of the limit}

We finally analyse the asymptotics of the sequence $f_k$.
Our first goal is as follows.

\begin{lem} The parameters $t_k$, $\lambda_k$ and $r_k$ satisfy
$$
\lim_{k \rightarrow \infty} (\log t_k:\log\lambda_k:\log r_k) = (1:1:2).
$$
\end{lem}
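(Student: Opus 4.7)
The plan is to read off the scaling relations by matching the two ends of the catenoid $f^2$ against the derivatives of the limit spheres $f^0$ at $N$ and $f^1$ at $\infty$, exploiting the fact that both ends of a catenoid have radius growing linearly in the conformal parameter.

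First I would handle the outer end ($z\to\infty$) of $f^2$, matched against $f^0$ near $N$. In the $\pi_S$ chart, $f^0$ is a smooth conformal immersion at $z=0$, so its diameter expansion is linear:
$$
\lim_{r\to 0}\frac{{\rm diam}\,f^0(\partial D_r)}{r} = 2\alpha_0, \qquad \alpha_0>0.
$$
For each fixed $R > 0$, the locally smooth convergence $f_k\to f^0$ away from $N$, combined with $t_kR\to 0$ and the definition of $f^2_k$, gives
$$
\frac{{\rm diam}\,f_k(\partial D_{t_kR})}{\lambda_k} = {\rm diam}\,f^2_k(\partial D_R) \longrightarrow {\rm diam}\,f^2(\partial D_R),
$$
while the numerator on the left equals $(2\alpha_0+o_k(1))\,t_kR$. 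Therefore
$$
\lim_{k\to\infty}\frac{\lambda_k}{t_k} = \frac{2\alpha_0 R}{{\rm diam}\,f^2(\partial D_R)},
$$
which is $R$-independent; as $R\to\infty$, the catenoid's linear end asymptotics give ${\rm diam}\,f^2(\partial D_R)/R\to 2c^\infty > 0$, so the common value lies in $(0,\infty)$. Hence $\log\lambda_k/\log t_k\to 1$.

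Next I would carry out the analogous matching at the inner end ($z\to 0$) of $f^2$ against $f^1$ at infinity. Since $f^1:\hat{\C}\to\S$ is a conformal equivalence with $f^1(\infty)=f^0(0)$, in the chart $w=1/\zeta$ it expands as $f^1(\zeta)=f^0(0)+a/\zeta+O(|\zeta|^{-2})$ with $a\neq 0$, so $\rho\,{\rm diam}\,f^1(\partial D_\rho)\to 2\alpha_1>0$ as $\rho\to\infty$. Writing $f_k(t_kz) = f^1_k(t_kz/r_k)$ and using $t_k\varrho/r_k\to\infty$ for fixed small $\varrho>0$, the same kind of matching yields
$$
\lim_{k\to\infty}\frac{r_k}{t_k\lambda_k}=\frac{\varrho\,{\rm diam}\,f^2(\partial D_\varrho)}{2\alpha_1},
$$
which is $\varrho$-independent; letting $\varrho\to 0$ and using the linear asymptotics of the other catenoid end identifies this value as a positive constant. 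Combined with Step~1 this yields $r_k/t_k^2$ bounded above and below by positive constants, i.e., $\log r_k/\log t_k\to 2$.

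Putting the two steps together gives $(\log t_k:\log\lambda_k:\log r_k)\to(1:1:2)$. The main obstacle is justifying the iterated limits $k\to\infty$ followed by $R\to\infty$ (respectively $\varrho\to 0$), since the three convergences ($f_k\to f^0$ on $\S^2\setminus\{N\}$, $f^1_k\to f^1$ on $\C$, and $f^2_k\to f^2$ on $\C\setminus\{0\}$) hold on different $k$-dependent domains whose overlaps are only marginal. This is handled by choosing diagonal sequences $R_k\to\infty$ and $\varrho_k\to 0$ slowly enough that $t_kR_k\to 0$ and $t_k\varrho_k/r_k\to\infty$; the scale separation $r_k\ll t_k\ll 1$ established in the previous section makes such a choice possible.
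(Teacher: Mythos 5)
The plan is geometrically natural, but the matching step has a real gap that the diagonal argument at the end does not close. The comparison ${\rm diam}\,f_k(\partial D_{t_kR})/(t_kR)\to 2\alpha_0$ invokes the locally smooth convergence $f_k\to f^0$, which is uniform only on compact subsets of $\S^2\setminus\{N\}$; the circle $\partial D_{t_kR}$ escapes every such compact set because $t_kR\to 0$. Symmetrically, the convergence $f^2_k\to f^2$ is locally uniform on $\C\setminus\{0\}$, so it does not control ${\rm diam}\,f^2_k(\partial D_{R_k})$ when $R_k\to\infty$. Choosing diagonal sequences indeed gives two sequences of scales along which each convergence persists, but nothing makes the two sequences meet: for the matching to take place on the same circle you need $t_kR_k$ to lie in the range where $f_k$ is still quantitatively near $f^0$, and there is no a priori relation between the rate at which $t_k\to 0$, the rate at which the diagonal for $f_k\to f^0$ permits the inner radius to shrink, and the rate at which the diagonal for $f^2_k\to f^2$ permits $R_k$ to grow. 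What is actually required is a \emph{uniform} estimate on $f_k$ through all intermediate scales $t_k\ll r\ll 1$, and your argument does not provide one.

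The paper supplies precisely this missing control, by a different route. It works with the radial average $u_k^*(r)=\mint_0^{2\pi}u_k(r,\theta)\,d\theta$ of the conformal factor, uses the Liouville equation $-\Delta u_k=K_{f_k}e^{2u_k}$ to bound the variation of $r(u_k^*)'(r)$ by the integrated Gauss curvature in the neck, integrates to get $|u_k^*(r)-u^*(\delta)|\leq 2\varepsilon\bigl(1+\log(\delta/r)\bigr)$ for all $r$ in the neck annulus, and adds a H\'elein-type oscillation bound ${\rm osc}_{D_{2r}\setminus D_r}u_k\leq C$ on individual annuli. The value of $u_k$ near $r=t_k$ is then tied to $\log(t_k/\lambda_k)$ via the smooth convergence $f^2_k\to f^2$. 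This is the mechanism that controls the possible logarithmic drift of the conformal factor, which your diameter-matching argument silently assumes away.

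That drift is also why your proposed conclusion is stronger than the lemma: your argument, if it went through, would show $\lambda_k/t_k\to\mathrm{const}\in(0,\infty)$. But the Liouville bound above only limits the drift to $O(\varepsilon\log(\delta/r))$, i.e.\ a sub-power-law factor $(\delta/r)^{O(\varepsilon)}$ in the conformal factor across the neck, not an $O(1)$ factor. This is exactly why the paper (and the statement) only asserts $\log\lambda_k/\log t_k\to 1$ rather than convergence of the ratio itself; the stronger claim is not established by the argument as written and may well be false.
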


\proof We again consider $f_k:\C \to \R^3$ using the chart
$\pi_{S}^{-1}:\C \to \S^2\backslash \{S\}$. The induced 
metric $(g_k)_{ij} = e^{2u_k} \delta_{ij}$ satisfies 
the Liouville equation
$$
-\Delta u_k = K_{f_k} e^{2u_k} \quad \mbox{ on } \C.
$$
Let $u_k^\ast(r) = \mint_0^{2\pi} u_k(r,\theta)\,d\theta$. It follows that 
$$
-(r (u_k^\ast)')' = 
- \mintdis_0^{2\pi} r\, \big(\partial_r^2 u_k + \frac{\partial_r u_k}{r}\big)\,d\theta
= - \mintdis_0^{2\pi} r \Delta u_k(r,\theta)\,d\theta
= \mintdis_0^{2\pi} r K_{f_k} e^{2 u_k}\,d\theta. 
$$
Thus for any $\varrho \in [\frac{t_k}{\delta},\delta]$, we can estimate
recalling our description of convergence
\begin{eqnarray*}
\sup_{\varrho \in [\frac{t_k}{\delta},\delta]} 
|\delta (u_k^\ast)'(\delta) - \varrho (u_k^\ast)'(\varrho)| 
& \leq & \int_{\frac{t_k}{\delta}}^{\delta} \mintdis_0^{2\pi} |K_{f_k}| e^{2u_k}\,r d\theta dr\\
& \leq & \frac{1}{4\pi} \int_{D_\delta \backslash D_{\frac{t_k}{\delta}}} |A_{f_k}|^2\,d\mu_{f_k}\\
& < & \varepsilon  \quad \mbox{ for all } k \geq k(\ve,\delta).
\end{eqnarray*}
This implies for $k \geq k(\ve,\delta)$ 
$$
\sup_{\varrho \in [\frac{t_k}{\delta},\delta]} \varrho |(u_k^\ast)'(\varrho)|
\leq \delta |(u_k^\ast)'(\delta)| + \ve.
$$
Now we have 
$$
(u_k^\ast)'(r) = \mintdis_0^{2\pi} \partial_r u_k(r,\theta)\,d\theta 
\to \mintdis_0^{2\pi} \partial_r u(r,\theta)\,d\theta = (u^\ast)'(r),
$$
where $u(r,\theta)$ is the conformal factor of the smooth equivalence 
$f^0:\S^2 \to \S$. We compute using the divergence theorem
$$
(u^\ast)'(\delta) = \mintdis_0^{2\pi} \partial_r u(\delta,\theta)\,d\theta = 
\mintdis_{\partial D_\delta} \langle \nabla u,\partial_r \rangle\,ds = 
\frac{1}{2\pi \delta} \int_{D_\delta} \Delta u\,dx dy \to 0 
\quad \mbox{ as } \delta \to 0.
$$
Thus $\delta |(u^\ast)'(\delta)| < \ve$ for $\delta > 0$ sufficiently 
small, and we obtain
$$
\sup_{\varrho \in [\frac{t_k}{\delta},\delta]} \varrho |(u_k^\ast)'(\varrho)| < 2\ve
\quad \mbox {for } k \geq k(\ve,\delta).
$$
Integration yields 
$$
\sup_{r \in [\frac{t_k}{\delta},\delta]} |u_k^\ast(r) - u_k^\ast(\delta)| 
\leq 2\ve\,\log \frac{\delta}{r} \quad \mbox {for } k \geq k(\ve,\delta).
$$
Using again $u^\ast_k(\delta) \to u^\ast(\delta)$ we finally get
for all $r \in [\frac{t_k}{\delta},\delta]$
$$
|u_k^\ast(r) - u^\ast(\delta)| \leq 2\ve \Big(1+\log \frac{\delta}{r}\Big) 
\quad \mbox{ if } \delta < \delta(\ve),\,k \geq k(\delta,\ve).
$$
Next, we prove that
\begin{equation}\label{osc.u}
{\rm osc}_{D_{2r}\backslash D_r} u_k \leq C \quad \mbox{ for any }
r\in [\frac{t_k}{\delta},\delta].
\end{equation}
Consider $f_{k,r}(z)=f_k(r z)$ where 
$r \in [\frac{t_k}{\delta},\delta]$. From Corollary 2.4 in 
\cite{KL12}, for any $p \in \partial D_{\frac{3}{2}}$ there exists a 
solution to the equation
$$
- \Delta \omega_{k,r} = K_{f_{k,r}} e^{2 u_{k,r}} \quad \mbox{ in }D_1(p),
$$
satisfying the estimates 
$$
\|\omega_{k,r}\|_{L^\infty(D_1(p))} + \|\nabla \omega_{k,r}\|_{L^2(D_1(p))} \leq C.
$$
Since $u_{k,r}-\omega_{k,r}$ is harmonic, we have the estimate
$$
{\rm osc}_{D_\frac{1}{2}(p)}(u_{k,r}-\omega_{k,r}) \leq 
C\,\|\nabla (u_{k,r}-\omega_{k,r})\|_{L^q(D_1(p))}
\quad \mbox{ for any } q\in (0,2).
$$
Now let $f:\S^2\rightarrow \R^3$ be a conformal immersion 
with induced metric $g = e^{2u}\,g_{\S^2}$, where $g_{\S^2}$
is the round metric normalized to $\mu_{g_{\S^2}}(\S^2) = 1$.
By Liouville we have 
$$
-\Delta_g u = K_{f}e^{2u}-4\pi.
$$
As $\|K_{f}e^{2u}-4\pi\|_{L^1}\leq 16\pi$, we have the estimate
\begin{equation}
\label{W1q}
r^{q-2}\|\nabla_{g_{\S^2}}u\|_{L^q(B_r(p))} \leq C(q) \quad
\mbox{ for any }q \in (0,2).
\end{equation}
Applying \eqref{W1q} we see that 
$$
\|\nabla u_{k,r}\|_{L^q(D_1(p))} = r^{q-2}
\|\nabla u_{k}\|_{L^q(D_r(p))} \leq C(q).
$$
By a covering argument, we get ${\rm osc}_{D_2\backslash D_1} u_{k,r} \leq C$.
But ${\rm osc}_{D_2\backslash D_1}u_{k,r} 
= {\rm osc}_{D_{2r}\backslash D_r}u_k$, thus the oscillation
bound \eqref{osc.u} is proved.\\
\\
We now come to comparing $t_k$ with $\lambda_k$. We know that
$\frac{t_k}{\lambda_k} \nabla f_k(t_kz)$ converges to $\nabla f^2$ smoothly, 
hence we have for $k$ sufficiently large
$$
2\sup_{\partial D_\frac{1}{\delta}}|\nabla f^2| \geq 
\frac{t_k}{\lambda_k}e^{u_k(t_kz)}\geq 
\frac{1}{2}\inf_{\partial D_\frac{1}{\delta}}|\nabla f^2| > 0.
$$
By \eqref{osc.u} we have $|u_k(t_kz)-u_k^*(z)| \leq C$ for $|z|=\frac{1}{\delta}$.
Hence, we get
$$
-C \leq \log \frac{t_k}{\lambda_k}+u_k^*(\frac{t_k}{\delta}) \leq C.
$$
Then
$$
-C+(1+\epsilon)\log t_k \leq \log \lambda_k \leq C+(1-\epsilon)\log t_k.
$$
Letting first $k \to \infty$ and then $\ve \to 0$, we get
$$
\lim_{k \to \infty}\frac{\log t_k}{\log \lambda_k} =1.
$$
To estimate $r_k$ we consider $\tilde{f}_k(z)=f_k(\frac{r_k}{z})$.
We compute 
$$
\tilde{f}_k(r_kz)=f_k\big(\frac{1}{z}\big) \to f_0\big(\frac{1}{z}\big),
\quad
\tilde{f}_k(z)\to f^1\big(\frac{1}{z}\big).
$$
Putting $\tilde{t}_k = r_k/t_k$ we further obtain 
$$
\frac{\tilde{f}_k(\tilde{t}_kz)}{\lambda_k}
=\frac{f_k(t_kz)}{\lambda_k}\rightarrow f^2(\frac{1}{z}).
$$
Using the arguments from above, we get
$$
\lim_{k \to \infty} \frac{\log \tilde{t}_k}{\log \lambda_k}=1,
\quad \mbox{ hence } \quad
\lim_{k \to \infty}\frac{\log r_k}{\log\lambda_k}=2.
$$
\endproof

Now we address the asymptotics of the multipliers $\Lambda_k$. For 
a conformal immersion we have in local coordinates the first 
variation formulae, see (\ref{eqwillmorevariationconformal}),
\begin{eqnarray*}
\delta \W(f)\phi & = & \int_D \langle Q[f],d\phi \rangle
\quad \mbox{ where }
Q[f] = \frac{1}{2} \big(\nabla\vec{H} - \frac{3}{2}H \,\nabla\vec{n}
+ \frac{1}{2} \vec{H} \times \nabla^\perp \vec{n}\big),\\
\delta \sigma(f)\phi & = & \int_D \langle R[f],\phi \rangle
\quad \mbox{ where }
R[f] = \frac{\sigma(f)}{\V(f)} \partial_1 f \times \partial_2 f
- \frac{3\sigma(f)}{2\mu(f)} \Delta f.
\end{eqnarray*}
The scaling is $Q[\lambda f] = \lambda^{-1} Q[f]$ and also
$R[\lambda f] = \lambda^{-1} R[f]$. The Euler Lagrange equation
(\ref{eqmultiplier}), scaled to $\mu(f) = 1$, has the form
\begin{equation}
\label{eqeulerlagrangeconform2}
\div Q[f] = \frac{3\Lambda}{2}
\big(4 \sqrt{\pi}\, \partial_1 f \times \partial_2 f
- \sigma \Delta f\big):=\frac{3\Lambda}{2}S[f].
\end{equation}
For simpilicity we assume $0\notin f^2(\C\setminus\{0\})$.
Put $I(y) = \frac{y}{|y|^2}$ and $I_k(y)= I(\frac{y}{\lambda_k}) = \lambda_k I(y)$. 
Note that $I_k \circ I_k = {\rm id}$. From the previous section, we 
compute using the assumption $f_k(t_k) = 0$ 
$$
I_k(f_k(t_k z)) = I\Big(\frac{f_k(t_k z))}{\lambda_k}\Big) \to I(f^2(z)) \quad 
\mbox{ for all } z \in \R^2.
$$
We need to compute the equation satisfied by the maps $F_k = I_k \circ f_k$. 
We compute
\begin{eqnarray*}
\delta \W(F_k,\phi) & = & \frac{d}{dt} \W(F_k+t\phi)|_{t=0}\\
& = & \frac{d}{dt}\W(I_k \circ (F_k+t\phi))|_{t=0}\\
& = & \delta \W(I_k(F_k),D_{F_k}I_k(\phi))\\
& = & \delta \W(f_k,D_{F_k}I_k(\phi))\\
&=&\Lambda_k\delta \sigma(f_k,D_{F_k}I_k(\phi))
\end{eqnarray*}
Then we have
\begin{eqnarray*}
\int_D\lan Q[F_k],d\phi\ran dx&=&
\Lambda_k\sigma(f_k)\int_D\lan R[f_k],D_{F_k}I_k(\phi)\ran dx\\
&=&\frac{3}{2}\Lambda_k\int_D\lan S[f_k],D_{F_k}I_k(\phi)\ran dx.
\end{eqnarray*}
Since $I_k(F_k+t\phi)=I_k(F_k+t\phi)=\frac{\lambda_k(F_k+t\phi)}{|F_k+t\phi|^2}$, 
we get 
$$
D_{F_k}I_k(\phi) 
= \frac{d}{dt} I_k \circ (F_k + t\phi)|_{t=0} 
= \lambda_k\left(\frac{\phi}{|F_k|^2} - 2\frac{F_k}{|F_k|^4}F_k\cdot \phi\right). 
$$
Then we have
\begin{eqnarray*}
\int_D\lan Q[F_k],d\phi\ran dx=\frac{3}{2}\lambda_k\Lambda_k\int_D\left(
\frac{1}{|F_k|^2}\left\lan S[f_k],\phi\right\ran
-2\frac{1}{|F_k|^4}\lan S[f_k],F_k\ran \lan F_k,\phi\ran\right) dx.
\end{eqnarray*}
Recalling that $F_k=\lambda_k\frac{f_k}{|f_k|^2}$, we get
$$
\div Q[F_k]=\frac{3}{2}\frac{\Lambda_k}{\lambda_k}
\left(|f_k|^2S[f_k]-2\lan S[f_k],f_k\ran f_k\right).
$$
Since $S^2\setminus D_{t_k}$ is also conformal to $D$, we compute 
\begin{eqnarray*}
-\int_{\partial D_{t_k}}\left\lan Q[F_k],\partial_r\right\ran &=&\int_{S^2\backslash D_{t_k}}{\div} Q[F_k]\\
&=&\frac{3}{2}\frac{\Lambda_k}{\lambda_k}\int_{S^2\backslash D_{t_k}}
\left(|f_k|^2S[f_k]-2\lan S[f_k],f_k\ran f_k\right)\\
&=&\frac{3}{2}\frac{\Lambda_k}{\lambda_k}\int_{S^2\backslash D_{t_k}}
e^{-2u_k}\left(|f_k|^2S[f_k]-2\lan S[f_k],f_k\ran f_k\right)d\mu_{f_k}.
\end{eqnarray*}
Obviously,
$$
\lim_{k\rightarrow+\infty}
\int_{\partial D_{t_k}}\lan Q[F_k],\partial_r\ran ds
=\lim_{k\rightarrow+\infty}\int_{\partial D_1}\lan Q[I(\frac{f_k(t_kz)}{\lambda_k})],
\partial_r\ran d\theta=\int_{\partial D_1}
\lan Q[I(f^2)],\partial_r\ran d\theta.
$$
We have
$$
\int_{S^2\backslash D_{t_k}}e^{-2u_k}\left(|f_k|^2S[f_k]-2\lan S[f_k],f_k\ran f_k\right)d\mu_{f_k}=\int_{S^2\backslash D_\delta}\cdots+\int_{D_\delta\backslash D_{t_k}}
\cdots.$$
Since
$$\left|e^{-2u_k}(|f_k|^2S[f_k]-2\lan S[f_k],f_k\ran f_k\right|<C(\sigma(f_k)|H_{f_k}|+1),$$
we have
\begin{eqnarray*}
&& \left|\int_{D_\delta\backslash D_{t_k}}e^{-2u_k}\left(|f_k|^2S[f_k]-2\lan S[f_k],f_k\ran f_k\right)d\mu_{f_k}\right|\\
&& \leq C\left(\int_{D_\delta\backslash D_{t_k}}\sigma(f_k)|H_{f_k}|d\mu_{f_k}+\mu_{f_k}(D_\delta\backslash D_{t_k})\right)\\
&& \leq C\left(\sigma(f_k)\sqrt{W(f_k)\mu_{f_k} (D_\delta\backslash D_{\frac{r_k}{\delta}})}
+ \mu_{f_k}(D_\delta\backslash D_{\frac{r_k}{\delta}})\right).
\end{eqnarray*}
By \eqref{eqchenli1},
$$
\lim_{\delta\rightarrow 0}\lim_{k\rightarrow+\infty}
\int_{D_{\delta}\backslash D_{t_k}}e^{-2u_k}\left(|f_k|^2S[f_k]-2\lan S[f_k],f_k\ran f_k\right)d\mu_{f_k}=0.
$$
By a direct calculation, on $S^2\backslash D_\delta$, we have
$$
\lim_{k\rightarrow+\infty}e^{-2u_k}\left(|f_k|^2S[f_k]-2\lan S[f_k],f_k\ran f_k\right)=
4\sqrt{\pi}(n_{f_0}|f_0|^2-2f_0\cdot n_{f_0} f_0).
$$
Then
$$
\lim_{\delta\rightarrow 0}\lim_{k\rightarrow+\infty}
\int_{S^2\backslash D_{\delta}}e^{-2u_k}(|f_k|^2S[f_k]-2\lan S[f_k],f_k\ran f_k)d\mu_{f_k}
=4\sqrt{\pi}\int_{S^2}\left(n_{f_0}|f_0|^2-2f_0\cdot n_{f_0} f_0\right)d\mu_{f_0}.
$$
Let $y_0$ be the center of $f^1$,  which is nonzero because $0 \in f^1(S^2)$,
and $R=\frac{1}{\sqrt{8\pi}}$.
Then 
$$
\begin{array}{lll}
\dint_{S^2}(n_{f_0}|f_0|^2-2f_0\cdot n_{f_0} f_0)d\mu_{f_0}
&=&\dint_{S^2}\left(n_{f_0}|Rn_{f_0}+y_0|^2-2\lan Rn_{f_0}+y_0,n_{f_0}\ran(Rn_{f_0}+y_0)\right)d\mu_{f_0} \\[\mv]
&=&\dint_{S^2}(-R^2n_{f_0}+|y_0|^2n_{f_0}-2Ry_0-2\lan y_0,n\ran y_0)d\mu_{f_0}\\[\mv]
&=&-2Ry_0\int_{S^2}d\mu_{f_0}=-Ry_0
\end{array}$$
where we used that 
$$
\int_{S^2}n_{f_0}=0 \quad \mbox{ and } \quad \int_{S^2}n_{f_0}\cdot y_0=0.
$$
Hence finally
\begin{equation}
\int_{\partial D_1}
\lan Q[I_0(f^2)],\partial_r\ran=\frac{3}{2}\sqrt{2}y_0\lim_{k\rightarrow+\infty}\frac{\Lambda_k}{\lambda_k}.
\end{equation}\\
It is well-known that $\int_{\partial D_1}
\lan Q[I(f^2)],\partial_r\ran$ is a nonzero vector parallel to the
symmetry axis of $f^2$ (cf \cite{KS12} or \cite{BR13}).

\section{Appendix1}

In this appendix, we use results from \cite{KS01} to derive estimates 
for the equation
\begin{equation}\label{Willmore}
\Delta_g H + |A^\circ|^2 H=aH+b, \quad \mbox{ for $a,b \in \R$ constant.}
\end{equation}
We start with higher order estimates when the curvature is not concentrated.

\begin{lem}\label{lemmahigher}
Let $f:\Sigma\rightarrow\R^3$ be a properly immersed surface which
satisfies \eqref{Willmore} for $|a|+|b|<\lambda$.
Let $\Sigma_1=f^{-1}(B_1(x_0)) \subset \!\subset\Sigma$,
and assume that $\int_{\Sigma_1}|A|^2 < \epsilon_0$. Then
$$
\|\nabla^mA\|_{L^\infty(\Sigma_\frac{1}{2})} < C=C(m,\lambda,\Lambda).
$$
\end{lem}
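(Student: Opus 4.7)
The plan is to adapt the interior regularity estimates for the Willmore equation from \cite{KS01} to the inhomogeneous right hand side $aH+b$. The small energy hypothesis $\int_{\Sigma_1}|A|^2 < \epsilon_0$ is the natural threshold for a graphical decomposition: for $\epsilon_0$ sufficiently small, by Simon-type arguments, $f(\Sigma)\cap B_{3/4}(x_0)$ can be written as the graph of a function $u$ over the tangent plane at $x_0$ with $\|Du\|_{L^\infty(B_{3/4})} + \|D^2u\|_{L^2(B_{3/4})}$ small, and $|A|$ is pointwise comparable to $|D^2u|$ in this chart. In graph coordinates equation~(\ref{Willmore}) becomes a quasilinear fourth order elliptic PDE
$$
\mathcal{L}_u\,u = a\,H[u] + b,
$$
whose principal symbol is a small perturbation of $\Delta^2$.

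From here I would bootstrap in the standard way. Initially $u\in W^{2,2}$ with small norm, so $H\in L^2$ and the right hand side $aH+b$ lies in $L^2$; fourth order Calder\'on--Zygmund theory then yields $u\in W^{4,2}_{loc}$, which via Sobolev embedding gives $\|A\|_{L^\infty(\Sigma_{5/8})}\leq C$. Next, the differentiated equation is linear fourth order in $\partial_\alpha u$ with coefficients of improving regularity; another application of $L^p$ estimates gives $u\in W^{5,p}_{loc}$ for some $p>2$. Iterating this scheme on nested balls $\Sigma_{1/2+2^{-k}}$ produces $u\in W^{k,p}_{loc}$ for every $k,p$, which is equivalent to the desired bound $\|\nabla^m A\|_{L^\infty(\Sigma_{1/2})}\leq C(m,\lambda)$.

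An alternative, purely intrinsic, route is to combine Simons' identity $\Delta A = \nabla^2H + A\ast A\ast A$ with (\ref{Willmore}) and test against $|\nabla^m A|^2\varphi^{2m+2}$ for a geometric cutoff $\varphi$, using the Michael--Simon Sobolev inequality to absorb curvature terms via the smallness of $\int|A|^2$; this is essentially the scheme of \cite[\S 3]{KS01}. The main obstacle in either approach is the constant term $b$, which breaks the natural scaling invariance of the Willmore equation and prevents the usual Simon rescaling argument. However, since we work on a fixed scale and $|b|<\lambda$, the term $b$ enters only as a uniformly bounded source in each step of the iteration, and the constants acquire at worst a linear dependence on $\lambda$; all other steps go through unchanged from \cite{KS01}.
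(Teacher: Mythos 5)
Your overall strategy---graphical decomposition on the small-energy scale followed by an elliptic bootstrap---is the same as the paper's, but the two implementations differ in one genuinely important place. The paper does \emph{not} begin the bootstrap from $u\in W^{2,2}$ as a fourth-order equation; it first quotes Theorem~2.10 of \cite{KS01} to obtain the $L^\infty$ estimate
$\|A\|_{L^\infty(\Sigma_{\varrho/2})}\le C\varrho^{-1}\|A\|_{L^2(\Sigma_\varrho)}+C(|a|\varrho+|b|\varrho^2)$
directly from the small-energy hypothesis, and only then passes (via Langer's $(r,\alpha)$-immersion theorem \cite{La85}) to graph coordinates. Once $|A|$ is bounded, $D^2u$ is bounded, the metric coefficients are Lipschitz, and the paper then closes the iteration by viewing \eqref{Willmore} as a \emph{coupled second-order system}: equation \eqref{eqhigherwillmore} is second-order elliptic in $H$ with bounded coefficients and bounded source, equation \eqref{eqhighermean} is second-order elliptic in $u$ with source $H$, and \eqref{eqhigherfund} recovers $A$ from $D^2u$. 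Alternating $L^p$ theory for the $H$-equation with Schauder for the $u$-equation produces all higher derivatives. Your primary route instead aims at fourth-order Calder\'on--Zygmund theory to get $u\in W^{4,2}_{loc}$ straight from $u\in W^{2,2}$; this step is where the real difficulty lives and is not justified as written, because the cubic term $|A^\circ|^2H$ is only in $L^{2/3}$ when $A\in L^2$, and the coefficients of your quasilinear fourth-order operator are themselves only $L^2$ at this stage. Those are exactly the obstructions that the cited estimate from \cite{KS01} is designed to overcome using the smallness of $\int|A|^2$; without invoking it (or reproving it via the intrinsic Simons-identity route you sketch as an alternative), the first rung of your ladder is missing. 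Your remarks on the inhomogeneous term $b$ are correct and match the paper: it enters as a fixed-scale source and contributes linearly to the constants, as is visible in the displayed estimate above.
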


\proof Applying Theorem 2.10 in \cite{KS01} yields 
\begin{eqnarray*}
\|A\|_{L^\infty(\Sigma_{\frac{\varrho}{2}}(x_0))} & \leq &
\frac{C}{\varrho}\|A\|_{L^2(\Sigma_\varrho(x_0))} +
C \varrho \big(|a|\,\|H\|_{L^2(\Sigma_\varrho(x_0))} 
+ |b| \mu(\Sigma_\varrho(x_0))^{\frac{1}{2}}\big)\\
& \leq & \frac{C}{\varrho}\|A\|_{L^2(\Sigma_\varrho(x_0))}
+ C\, \big(|a| \varrho + |b| \varrho^2),
\end{eqnarray*}
where we used the quadratic area growth \cite{S}. In particular 
\begin{equation}
\label{eqhigherfundinfty}
\|A\|_{L^\infty(\Sigma_\frac{3}{4})} \leq C.
\end{equation}
Now each $x \in f(\Sigma)\cap B_\frac{1}{2}(x_0)$ has finite preimage
$\{y^1,\ldots,y^m\}$. By Langer's theorem \cite{La85}, for each 
$\alpha \leq 1$ there is a constant $r > 0$ such that $f$ is an 
$(r,\alpha)$-immersion on $\Sigma_\frac{3}{4}$. This means that 
on suitable neighborhoods $U^i$ of $y^i$, the $f|_{U_i}$ are
graphs over $P^i \cap B_r(x)$, where $P^i = {\rm im}\, Df(y^i)$
are the tangent planes, with graph functions $u^i$ satisfying 
$$
\|\nabla u^i\|_{C^{0}(B_r(x)\cap P^i)} \leq \alpha. 
$$
In fact Langer shows in Lemma 2.1 in \cite{La85} that 
$$
\|u^i\|_{W^{2,p}(B_r(x)\cap P^i)}\leq C(p) \quad \mbox{ for all }p < \infty.
$$
It follows that the metric $g_{\lambda \mu}$ satisfies in graph coordinates
$$
\|g_{\lambda \mu}\|_{C^{0,\beta}} \leq C, \quad 
|g_{\lambda \mu}-\delta_{\lambda\mu}| \leq C\alpha. 
$$
For $f(x,y) = (x,y,u(x,y))$ we now have the equations
\begin{eqnarray}
\label{eqhighermetric}
g_{\lambda \mu} & = & \delta_{\lambda \mu} + \partial_\lambda u\, \partial_\mu u ,\\
\label{eqhigherwillmore}
g^{\lambda \mu} \partial^2_{\lambda \mu} H 
+ \frac{\partial_\lambda (\sqrt{\det g}\, g^{\lambda \mu})}{\sqrt{\det g}}\,\partial_\mu H
& = & -|A^\circ|^2 H + a H + b,\\
\label{eqhighermean}
\Big(\delta_{\lambda \mu} 
- \frac{\partial_\lambda u\, \partial_\mu u}{1+|Du|^2}\Big)\,
\partial^2_{\lambda \mu} u & = &  H\,\sqrt{1+|Du|^2},\\
\label{eqhigherfund}
A_{\lambda \mu}^3 & = & 
\Big(1-g^{\gamma \nu} \partial_\gamma u\,\partial_\nu u\Big)\,\partial^2_{\lambda \mu} u,
\end{eqnarray}
From (\ref{eqhigherfundinfty}) and (\ref{eqhigherfund}) we know that 
$\nabla^2 u$ is bounded, and so is $\nabla g_{\lambda \mu}$ by (\ref{eqhighermetric}).
Therefore we can apply $L^p$-theory to (\ref{eqhigherwillmore}), see 
Theorem 9.11 in \cite{GT}, to show $H \in W^{2,p}$ locally 
for any $p < \infty$. The right hand side of (\ref{eqhighermean}) then
belongs to $C^{1,\alpha}$ for any $\alpha < 1$, and the coefficients 
of the equation are also in $C^{1,\alpha}$. Schauder estimates give 
$u \in C^{3,\alpha}$, which in turn yields $g \in C^{2,\alpha}$
and $A \in C^{1,\alpha}$. Returning to (\ref{eqhigherwillmore}) 
improves to $H \in C^{3,\alpha}$. The lemma follows by iterating 
Schauder estimates. \endproof


The next lemma shows a relation between a conformal map and an extrinsic
estimate.

\begin{lem}
Let $f\in C^\infty(D,\R^3)$, $f(0) = 0$, with metric $g=e^{2u}g_{euc}$.
Assume
\begin{equation}
\label{eqappendix1}
\int_{D}|A|^2 \leq 8\pi-\tau \quad \mbox{ and } \quad |u| \leq \beta.
\end{equation}
Let $C_\delta(f)$ be the component of $f^{-1}(B_\delta(0))$ 
containing the origin. Then for any $r > 0$ there exists
$\delta=\delta(\tau,\beta,r) > 0$ such that
$$
C_\delta(f)\subset D_r.
$$
Likewise, for any $\delta > 0$ there exists $\rho=\rho(\tau,\beta,\delta) > 0$
such that
$$
D_\rho \subset C_\delta(f). 
$$
\end{lem}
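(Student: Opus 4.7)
The second assertion follows from the Lipschitz bound: $|df| \leq e^\beta$ implies $f(D_\rho) \subset B_{e^\beta \rho}(0)$, so taking $\rho = e^{-\beta}\delta$ gives $D_\rho \subset f^{-1}(B_\delta(0))$, hence $D_\rho \subset C_\delta(f)$ by connectedness. For the first assertion, which requires $\delta$ depending only on $\tau,\beta,r$ (uniformly in $f$), my plan is to argue by contradiction via compactness, producing infinitely many zeros of a limit map in a compact set where zeros must be isolated.

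Suppose the first assertion fails: there are smooth conformal immersions $f_k: D \to \R^3$ satisfying the hypotheses and $\delta_k \to 0$ with $z_k \in C_{\delta_k}(f_k)$, $|z_k|\ge r$. First I establish compactness: the identity $\Delta f_k = e^{2u_k} \vec H_{f_k}$ combined with $\int H_k^2\, d\mu_{g_k} \leq 2\int |A_k|^2\, d\mu_{g_k} \leq 16\pi$ and $|u_k|\le\beta$ bounds $\Delta f_k$ uniformly in $L^2(D)$; together with the Lipschitz bound this gives $f_k$ uniformly bounded in $W^{2,2}_{\mathrm{loc}}(D)$. After extracting a subsequence, $f_k\to f$ weakly in $W^{2,2}_{\mathrm{loc}}$ and strongly in $C^0_{\mathrm{loc}}$. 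The limit $f$ is $W^{2,2}$-conformal with $f(0)=0$, $|u_f|\le\beta$, $\int|A_f|^2\le 8\pi-\tau$; I will invoke the M\"uller--\v Sver\'ak / H\'elein regularity theory for conformal immersions with $\int|A|^2 < 8\pi$ and bounded conformal factor to conclude $f$ is smooth.

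Next I produce infinitely many zeros of $f$. Each $C_{\delta_k}(f_k)$ is open and connected, hence path-connected, so I choose a continuous path $\gamma_k \subset C_{\delta_k}(f_k)$ from $0$ to $z_k$. For every $s\in[r/2,r]$ the intermediate value theorem provides $w_k(s)\in\gamma_k$ with $|w_k(s)|=s$ and $|f_k(w_k(s))|<\delta_k$. Diagonalizing over a countable dense set $\{s_j\}\subset[r/2,r]$, I arrange $w_k(s_j)\to w(s_j)$ with $|w(s_j)|=s_j$, and uniform convergence on $\overline{D_r}$ gives $f(w(s_j)) = \lim_k f_k(w_k(s_j)) = 0$. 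The $w(s_j)$ have pairwise distinct moduli, yielding an infinite subset of $f^{-1}(0)\cap\overline{D_r}$. But $f$ is a smooth immersion with $|df|\ge e^{-\beta}$ pointwise, so Taylor expansion at any zero $z_0$ gives $|f(z)|\ge\tfrac12 e^{-\beta}|z-z_0|$ for $|z-z_0|$ small; zeros of $f$ are therefore isolated and $f^{-1}(0)\cap\overline{D_r}$ is finite, contradicting the infinite family $\{w(s_j)\}$.

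The hard part will be verifying the smoothness (or at least $C^1$ regularity) of the weak limit $f$ under the sub-critical hypothesis $\int|A|^2<8\pi$, which is exactly where H\'elein's Coulomb-gauge argument is needed; the remainder of the proof is standard compactness (Rellich in $W^{2,2}$), elementary planar topology (path-connectedness and the intermediate value theorem), Taylor expansion, and the isolated-zero characterization of immersions.
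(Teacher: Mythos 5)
Your argument is correct in outline and follows essentially the same route as the paper for the first assertion, while being genuinely simpler for the second. A few points worth noting.

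\textbf{Second assertion.} Your direct Lipschitz argument (conformal factor bound $|u|\le\beta$ gives $|Df|\le e^\beta$, so $f(D_\rho)\subset B_{e^\beta\rho}(0)$, take $\rho=\min(e^{-\beta}\delta,\tfrac12)$, then use connectedness of $D_\rho$) is cleaner than the paper's. The paper runs a second contradiction-and-compactness argument here, extracting a weak $W^{2,2}$ limit and deriving $0\ge\delta$; your version is elementary, avoids compactness entirely, and even gives an explicit $\rho$. This is a real simplification.

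\textbf{First assertion.} Your contradiction-via-compactness is the same strategy as the paper's, and your derivation of the uniform $W^{2,2}_{loc}$ bound directly from $\Delta f_k=e^{2u_k}\vec H_{f_k}$ together with $\int|H_k|^2\,d\mu_{g_k}\le 2\int|A_k|^2\,d\mu_{g_k}$ and $|u_k|\le\beta$ is a nice, self-contained route to the compactness that the paper simply cites from H\'elein/Kuwert--Li. Where you diverge: the paper uses the path $\gamma_k$ to observe that the limit $f$ has a zero on \emph{every} circle $\partial D_\varrho$, $\varrho\in(0,r]$, so the zeros accumulate at the origin, and then invokes the local expansion for $W^{2,2}$ conformal immersions \emph{at the origin} (where $f(0)=0$ is known) to conclude that $f$ is constant, contradicting $|Df|\ge e^{-\beta}$. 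You instead extract zeros with distinct moduli in $[r/2,r]$ and argue their isolated-zero finiteness. That is fine, but it requires the same local-expansion input applied at a cluster point $w^*$ of your zeros $w(s_j)$ rather than at $0$.

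\textbf{The gap you flagged is the real one.} The weak $W^{2,2}$ limit $f$ of conformal immersions is in $W^{2,2}_{\rm conf}$ with $|u|\le\beta$, but it satisfies no PDE, so you cannot conclude $f$ is smooth, and ``Taylor expansion'' is not available. What is available, and what the paper actually uses, is the M\"uller--\v Sver\'ak/Kuwert--Li local expansion for $W^{2,2}$ conformal immersions: near any interior point $z_0$ a nonconstant branched conformal immersion satisfies $f(z)-f(z_0)=c(z-z_0)^{m}+o(|z-z_0|^m)$, and since $|u|\le\beta$ forces $m=1$ one gets the local bilipschitz lower bound $|f(z)-f(z_0)|\ge c\,e^{-\beta}|z-z_0|$ for $z$ near $z_0$. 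This replaces your ``smooth + Taylor'' step exactly and is the only nonelementary ingredient; with it your annulus argument (or the paper's accumulation at $0$) goes through. So the proposal is correct modulo replacing the smoothness claim with the $W^{2,2}_{\rm conf}$ local expansion, which is the same tool the paper appeals to.
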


\begin{proof}
If the first statement fails then we can find $\delta_k \to 0$
and $f_k \in C^\infty(D,\R^3)$ with (\ref{eqappendix1}), where
$f_k(0) = 0$ and $g_{f_k}=e^{2u_k}g_{euc}$, such that there exist
$$
z_k\in C_{\delta_k}(f_k) \quad \mbox{ with } |z_k| \geq r.  
$$
By results in \cite{Hel02} and \cite{KL12}, $f_k$
converges to some $f \in W^{2,2}_{conf}(D,\R^3)$ locally 
weakly in $W^{2,2}(D)$ and strongly in $C^0(D,\R^3)$.
Now there are curves $\gamma_k:[0,1] \to D$ with $\gamma_k(0) = 0$,
$\gamma_k(1) = z_k$ and $|f_k(\gamma_k(t))| < \delta_k$ 
for all $t \in [0,1]$. It follows that 
$$
\min_{z \in \partial D_\varrho} |f(z)| = 0 \quad \mbox{ for all }
\varrho \in (0,r].
$$
By the local expansion for $W^{2,2}$ immersions, we conclude 
that $f$ is constant. But $|Df|^2 \geq \frac{1}{2} e^{-2\beta} > 0$,
a contradiction.\\ 
\\
If the second claim was not true, then there exist points
$z_k \notin C_\delta(f_k)$ such that $z_k \to 0$, where
$f_k \in C^\infty(D,\R^n)$ satisfies $f_k(0) = 0$ and (\ref{eqappendix1}).
Now we have either $|f(z_k)|\geq \delta$, or $z_k$ is in
another component of $f^{-1}(B_\delta(0))$. In the second 
we can find $\lambda_k \in (0,1)$ such that 
$|f(\lambda_k z_k)| \geq \delta$. Thus, for both cases, we have
$\lambda_k \in (0,1]$ such that $|f(\lambda_k z_k)|\geq \delta$.\\
\\ 
As above the sequence converges to a $W^{2,2}$ conformal immersion 
$f:D \to \R^3$, locally uniformly in $D$. But then
$$
0 = \lim_{k \to \infty} |f_k(0)| 
= \lim_{k \to \infty} |f_k(\lambda_k z_k)| 
\geq \delta,
$$
which is again a contradiction. \end{proof}

Applying the above lemma, we obtain the following:

\begin{pro}
Let $f:D \to \R^3$ be a conformal immersion with 
metric $g_{f}=e^{2u}g_{euc}$, which satisfies
\eqref{Willmore} with $|a|+|b| \leq \alpha$.
We assume $|u| \leq \beta$. There exists a constant
$\epsilon_0 > 0$, such that if $\int_D |A|^2\,d\mu_f < \epsilon_0$,
then for any $r < 1$ and $m > 0$
and $m>0$
$$
\|\nabla^m f\|_{L^\infty(D_r)} \leq C(m,r,\alpha,\beta).
$$
\end{pro}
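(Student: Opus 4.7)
The plan is to combine the pointwise second fundamental form estimates of Lemma \ref{lemmahigher} with the intrinsic/extrinsic comparison in the preceding lemma, and then to convert curvature bounds into derivative bounds on $f$ via the Liouville equation for $u$. Fix $z_0 \in D_r$, set $\rho_0 = (1-r)/4$, and consider the translated, rescaled map
$$
\tilde f(\tilde z) = f(z_0 + \rho_0 \tilde z) - f(z_0) \quad \text{on } D_1.
$$
Since both $\Delta_g H$ and $|A^\circ|^2 H$ are geometric under domain rescaling, $\tilde f$ solves \eqref{Willmore} with the same constants $a,b$; the conformal factor is shifted by $\log \rho_0$, the total curvature integral remains below $\epsilon_0 < 8\pi$, and $\tilde f(0)=0$. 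The previous lemma then yields $\delta = \delta(\epsilon_0,\beta,\rho_0) > 0$ so that the component $C_\delta(\tilde f)\subset D_{1/2}$. Setting $\hat f = \tilde f/\delta$ and restricting to this component gives a properly immersed surface whose image lies in $\overline{B_1(0)}$ with preimage relatively compact in its parameter domain, and which satisfies \eqref{Willmore} with rescaled constants $\hat a = a \delta^2$, $\hat b = b \delta^3$, both bounded in terms of $\alpha$. Applying Lemma \ref{lemmahigher} produces $\|\nabla^m A_{\hat f}\|_{L^\infty(\Sigma_{1/2})}\le C$. Passing back through the rescalings and invoking the second half of the preceding lemma, one finds that a Euclidean disk $D_{\rho_1}(z_0)$ of some fixed radius $\rho_1 = \rho_1(r,\alpha,\beta) > 0$ is contained in this component, so $\|\nabla^m A_f\|_{L^\infty(D_{\rho_1}(z_0))}\le C(m,r,\alpha,\beta)$, and a finite cover of $D_r$ by such disks gives the global bound.

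Second, I would convert these $A$-bounds into bounds on $\nabla^m f$. The conformal factor satisfies the Liouville equation $-\Delta u = K e^{2u}$ with $|K|\le \tfrac{1}{2}|A|^2$; since $|u|\le \beta$ and $\nabla^m A$ is already controlled, iterated $L^p$ and Schauder estimates yield $\|\nabla^m u\|_{L^\infty(D_{r'})}\le C$ for any $r' < r$. Combining this with the structural identity
$$
\partial_i \partial_j f = A_{ij}\vec n + \Gamma^k_{ij}\,\partial_k f,\qquad \Gamma^k_{ij} = \delta^k_i \partial_j u + \delta^k_j \partial_i u - \delta_{ij}\partial_k u,
$$
together with $|\partial_\alpha f| = e^u \le e^\beta$ and $|\vec n|=1$, a straightforward induction on $m$ (differentiating this identity and bounding every term by quantities already controlled at the previous level) yields $\|\nabla^m f\|_{L^\infty(D_r)}\le C(m,r,\alpha,\beta)$.

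The main obstacle I anticipate is the very first step: $f$ is defined only on $D$ and is not assumed to be properly immersed there, so Lemma \ref{lemmahigher} cannot be invoked directly. The preceding lemma is used precisely to extract a properly immersed piece at the correct extrinsic scale. The bookkeeping is somewhat delicate because the conformal factor bound picks up additive terms $|\log \rho_0|$ and $|\log \delta|$ under the two rescalings, and the Willmore constants $\hat a, \hat b$ are altered; one must check that all resulting constants still depend only on the original $m,r,\alpha,\beta$. Once this reduction is in place, the elliptic regularity in the second half of the argument is routine.
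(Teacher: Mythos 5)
Your proposal is essentially the argument the paper has in mind: the authors say only that the proof ``uses a priori estimates similar to Lemma \ref{lemmahigher} and is omitted,'' and your route --- rescale to an extrinsic ball, invoke the preceding lemma (the $C_\delta(f)$ lemma) to pass from the intrinsic disk $D$ to an extrinsic ball scale where Lemma \ref{lemmahigher} applies, then convert covariant bounds on $A$ into Euclidean bounds on $\nabla^m f$ via the Liouville equation and the Gauss formula --- is exactly the kind of argument the one-line remark is pointing at. The identification of the $C_\delta$ lemma as the bridge between intrinsic and extrinsic scales is the key observation, and you have it.

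One point deserves a slightly more careful treatment than you give it. You write that restricting $\hat f = \tilde f/\delta$ to the component $C_\delta(\tilde f)$ ``gives a properly immersed surface whose image lies in $\overline{B_1(0)}$ with preimage relatively compact in its parameter domain.'' As stated this is inconsistent: if the parameter domain \emph{is} the component, then $\hat f^{-1}(B_1(0))$ equals the whole parameter domain and is not compactly contained in it, which is the hypothesis $\Sigma_1\subset\!\!\subset\Sigma$ of Lemma \ref{lemmahigher}. The fix is to invoke the $C_\delta$ lemma at a slightly larger scale: choose $\delta'=\delta'(\tau,\beta+|\log\rho_0|,3/4)$ so that $C_{\delta'}(\tilde f)\subset D_{3/4}$, set $\delta=\delta'/2$ and $\Sigma := C_{\delta'}(\tilde f)=C_2(\hat f)$. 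Then $\hat f|_\Sigma$ is proper into $B_2(0)$, $\Sigma_1 := \hat f^{-1}(B_1(0))\cap\Sigma\subset\!\!\subset\Sigma$ (since $|\hat f|=2$ on $\partial\Sigma$), and $\int_{\Sigma_1}|A_{\hat f}|^2\le\int_D|A_f|^2<\epsilon_0$, so Lemma \ref{lemmahigher} applies as you intend. You already flag the bookkeeping of constants as delicate; this is the one place where the bookkeeping genuinely matters, and with this adjustment the rest of your argument (the second application of the $C_\delta$ lemma to find a fixed disk $D_{\rho_0\rho_1}(z_0)$ inside $\Sigma_{1/2}$ in the original coordinates, the Liouville bootstrap for $u$, and the induction via $\partial_i\partial_j f = A_{ij}\vec n+\Gamma^k_{ij}\partial_k f$) goes through.
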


The proof uses a priori estimates similar to Lemma \ref{lemmahigher}
and is omitted. 

\section{Appendix2}
\begin{thm}\label{theoremconvergence}
Let
$f_k\in W^{2,2}(D,\R^n)$  satisfy
\begin{itemize}
\item[{\rm 1)}] $\int_{D}|A_{f_k}|^2d\mu_{f_k}<4\pi-\tau$.
\item[{\rm 2)}] $f_k$ can be extended to a closed 
immersed surface $f_k:\Sigma_k \to \R^n$ with
$$\int_{\Sigma_k}|A_{f_k}|^2d\mu_{f_k}<\Lambda.$$
\end{itemize}
Take a curve $\gamma:[0,1]\rightarrow D$,
and set $\lambda_k=diam\, f_k(\gamma)$.
Then
we can find  a subsequence of $\frac{f_k
-f_k(\gamma(0))}{\lambda_k}$ which
converges  weakly in
$W^{2,2}_{loc}(D)$ to an
$f_0\in W_{conf,loc}^{2,2}(D,\R^n)$.
Furthermore, we can find an inverse $I=\frac{y-y_0}{|y-y_0|^2}$
with $y_0\notin f_0(D)$ such that
$$\int_\Sigma(1+|A_{I(f_0)}|^2)d\mu_{I(f_0)}<+\infty.$$
\end{thm}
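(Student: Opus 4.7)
The plan is to rescale so that the disk map cannot collapse, apply a H\'elein-type conformal convergence result on $D$, and then use the closed-surface extension to locate a good inversion center.

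First I would set $\tilde f_k=(f_k-f_k(\gamma(0)))/\lambda_k$, so that $\tilde f_k(\gamma(0))=0$ and $\mathrm{diam}\,\tilde f_k(\gamma)=1$. The rescaling preserves the scale-invariant quantities $\int|A|^2\,d\mu$, so hypotheses (1) and (2) persist. Treating $\tilde f_k$ as conformally parametrized on $D$ with conformal factor $\tilde u_k$, the bound $\int_D|A_{\tilde f_k}|^2\,d\mu<4\pi-\tau<8\pi$ is below H\'elein's threshold, so I invoke the alternative as in \cite[Thm.~5.1]{KL12}: along a subsequence, either (a) $\tilde u_k$ is locally uniformly bounded in $L^\infty_{loc}(D)$, or (b) $\tilde u_k\to-\infty$ locally uniformly and $\tilde f_k\to$~constant in $C^0_{loc}(D)$. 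Case (b) contradicts the normalization $\mathrm{diam}\,\tilde f_k(\gamma)=1$.

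In case (a), the identities $|\nabla\tilde f_k|^2=2e^{2\tilde u_k}$ and $|\Delta\tilde f_k|^2=e^{4\tilde u_k}|H_{\tilde f_k}|^2$, together with the $L^2$-control of $H_{\tilde f_k}$ from hypothesis (1), give a uniform $W^{2,2}_{loc}(D,\R^n)$ bound on $\tilde f_k$. Extract a subsequential weak limit $f_0$. Strong $W^{1,p}_{loc}$ convergence combined with a.e.~convergence of $\tilde u_k$ lets the identity $\partial_i\tilde f_k\cdot\partial_j\tilde f_k=e^{2\tilde u_k}\delta_{ij}$ pass to the limit, so $f_0\in W^{2,2}_{conf,loc}(D,\R^n)$; the lower bound on $e^{\tilde u_k}$ on compacta ensures $f_0$ is a genuine immersion.

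The main obstacle is producing $y_0\notin f_0(D)$ with $(1+|A_{I(f_0)}|^2)\,d\mu_{I(f_0)}\in L^1$. Here I would exploit hypothesis (2): Simon's monotonicity formula applied to the closed extensions $\tilde f_k:\Sigma_k\to\R^n$ gives the uniform upper density bound $\mu_{\tilde f_k}(\tilde f_k^{-1}(B_\rho(y)))\le C(\Lambda)\rho^2$ for every $y\in\R^n$ and $\rho>0$. Since $\tilde f_k(\gamma)\subset\overline{B_1(0)}$, comparing the volume of the $\rho_0$-tubular neighborhood of $\tilde f_k(\Sigma_k)\cap B_R(0)$, which is at most $C R^2\rho_0^{n-2}$, with the volume of a large annulus produces, for $R/\rho_0$ large enough (depending only on $\Lambda,\tau,n$), a ball $B_{\rho_0}(y_0^k)$ disjoint from $\tilde f_k(\Sigma_k)$. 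Passing to a further subsequence $y_0^k\to y_0$, the local uniform convergence forces $B_{\rho_0/2}(y_0)\cap f_0(D)=\emptyset$, so $|f_0-y_0|\ge\rho_0/2$ on $D$. With $I(y)=(y-y_0)/|y-y_0|^2$ the inverted image $I(f_0(D))$ lies in $B_{2/\rho_0}(0)$, and $d\mu_{I(f_0)}=|f_0-y_0|^{-4}d\mu_{f_0}$. The M\"obius transformation law for the second fundamental form yields the pointwise bound $|A_{I(f_0)}|^2\,d\mu_{I(f_0)}\le C\bigl(|A_{f_0}|^2+|f_0-y_0|^{-2}\bigr)d\mu_{f_0}$, and Simon's diameter-area estimate applied to the bounded-image closed surface $I\circ\tilde f_k$ (whose Willmore energy is preserved up to controlled boundary corrections) gives finite total area in the limit. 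The delicate step, requiring the tightest coupling of the disk-level convergence with the global closed-surface data, is the uniform extraction of this exclusion ball.
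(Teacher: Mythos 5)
Your plan runs into a genuine gap at the very first step. To invoke the alternative of H\'elein's theorem (\cite[Thm.~5.1]{KL12}) on $\tilde f_k:D\to\R^n$, you need not only the sub-threshold bound $\int_D|A_{\tilde f_k}|^2<4\pi-\tau$ but also a uniform local \emph{area} bound, $\mu_{\tilde f_k}(\Omega)\le C(\Omega)$ for $\Omega\subset\subset D$. Without it the dichotomy ``$\tilde u_k$ locally bounded'' vs.\ ``$\tilde u_k\to-\infty$'' is not exhaustive: a third possibility is that the conformal factor blows up to $+\infty$ on some compact subset while the area there diverges, and nothing in your hypotheses (the normalization $\mathrm{diam}\,\tilde f_k(\gamma)=1$ only pins down the scale on the fixed curve $\gamma$) rules this out. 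The paper's proof is structured around precisely this issue. It distinguishes two cases. If $\mathrm{diam}\,\tilde f_k(D)$ stays bounded, Simon's monotonicity on the closed extensions $\Sigma_k'$ gives the area bound and H\'elein applies directly. If $\mathrm{diam}\,\tilde f_k(D)\to\infty$, the paper first chooses a ball $B_\delta(y_0)$ disjoint from all $\Sigma_k'$ (the very exclusion-ball device you propose, but used \emph{before} the compactness argument rather than after) and passes to the inverted maps $f_k''=I(f_k')$, whose images lie in $B_{1/\delta}(0)$ so the area bound again follows from monotonicity. Since $\int_D|A_{f_k''}|^2$ is no longer controlled by $4\pi-\tau$ (inversion preserves $\int_{\Sigma_k}|A|^2$ but not the disk-restricted energy), $f_k''$ may concentrate at a finite set, and recovering weak $W^{2,2}$ convergence of $f_k'$ on \emph{all} of $D$, including near the preimages of the inversion center and the concentration points of $f_k''$, requires a further argument — solving the Liouville equation with a bounded particular solution $v_k$ (this is exactly where the $4\pi-\tau$ threshold enters, via \cite[Cor.~2.4]{KL12}) and applying the maximum principle to $u_k'-v_k$ with boundary control coming from the good annular region. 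Your proposal contains nothing that plays this role.

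The same gap infects your treatment of the final assertion: the pointwise M\"obius estimate $|A_{I(f_0)}|^2 d\mu_{I(f_0)}\le C(|A_{f_0}|^2+|f_0-y_0|^{-2})d\mu_{f_0}$ requires $\mu(f_0)<\infty$ to conclude, but in the unbounded-diameter case $\mu(f_0)$ can indeed be infinite; the paper instead obtains the finiteness on $I(f_0)$ by applying Fatou to the \emph{inverted} sequence $f_k''$, whose image and total curvature are uniformly controlled. Your careful covering/tubular-neighborhood argument for locating the exclusion ball is a nice elaboration of a step the paper asserts without detail and is essentially correct, but it needs to come first, not last, and it must be followed by the unbounded-diameter machinery sketched above.
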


\proof
Put $f_k'=\frac{f_k-f_k(\gamma(0))}{\lambda_k}$,
$\Sigma_k'=\frac{\Sigma_k-f_k(\gamma(0))}{\lambda_k}$.
We have two cases:\vspace{0.7ex}

\noindent Case 1: $diam(f_k')<C$. By
inequality (1.3) in \cite{S} with $\rho=\infty$,
$\frac{\Sigma_k'\cap B_\sigma(\gamma(0))}{\sigma^2}\leq C$ for any $\sigma>0$.
Hence we get $\mu(f_k')<C$ by taking
$\sigma=diam(f_k')$. Then by Helein's convergence theorem
\cite{Hel02,KL12},
$f_k'$ converges weakly
in $W^{2,2}_{loc}(D)$. Since
$diam\, f_k'
(\gamma)=1$, the weak limit is not trivial.\\

\noindent Case 2: $diam(f_k')\rightarrow +\infty$. We take a point
$y_0\in\R^n$ and a constant $\delta>0$, s.t.
$$B_\delta(y_0)\cap \Sigma_k'=\emptyset,\s \forall k.$$
Let $I=\frac{y-y_0}{|y-y_0|^2}$, and
$$f_k''=I(f_k'),\s \Sigma_k''=I(\Sigma_k').$$
By conformal invariance of Willmore functional we have 
$$\int_{\Sigma_k''}|A_{\Sigma_k''}|^2d\mu_{\Sigma_k''}
=\int_{\Sigma_k}|A_{\Sigma_k}|^2d\mu_{\Sigma_k}<\Lambda.$$
Since $\Sigma_k''\subset B_\frac{1}{\delta}(0)$, also by (1.3) in \cite{S},
we get $\mu(f_k'')<C$. Let
$$\mathcal{S}(\{f_k''\}):=
\{p\in D: \lim\limits_{r\rightarrow 0}\varliminf\limits_{k\rightarrow+\infty}
\int_{D_(p)}|A_{f_k''}|^2d\mu_{f_k''}\geq 4\pi \}.$$
Then
$f_k''$ converges weakly in $W^{2,2}_{loc}(D\backslash 
\mathcal{S}(f_k''))$.

Next, we prove that $f_k''$ does not converge to a point by assumption.
If $f_k''$ converges to a point in
$W^{2,2}_{loc}(D\backslash \mathcal{S}(f_k''))$,
then the limit must be 0,  for $diam\,(f_k')$
converges to $+\infty$.
By the
definition of $f_k''$, we can find a $\delta_0>0$,
such that $f_k''(\gamma)\cap
B_{\delta_0}(0)=\emptyset$. Thus for any $p\in \gamma([0,1])
\backslash \mathcal{S}(f_k'')$, $f_k''$ will not converge to $0$. A contradiction.

Then we only need to prove that $f_k'$ converges weakly in
$W^{2,2}_{loc}(D,\R^n)$.
Let $f_0''$ be the limit of $f_k''$ which is a branched immersion of $D$ in $\R^n$.
Let $\mathcal{S}^*=f_0^{''-1}(\{0\})$,
which is isolate.

First, we prove that for any $\Omega\subset\subset D\backslash 
(\mathcal{S}^*\cup\mathcal{S}(\{f_k''\}))$, $f_k'$
converges weakly in $W^{2,2}(D,\R^n)$:
Since $f_0''$ is continuous
on $\bar{\Omega}$, we may assume
$dist(0,f_0''(\Omega))>\delta>0$. Then $dist(0,f_k''(\Omega))>\frac{\delta}{2}$
when $k$ is sufficiently large. Noting that $f_k'
=\frac{f_k''}{|f_k''|^2}+y_0$, we get that $f_k'$ converges weakly in
$W^{2,2}(\Omega,\R^n)$.

Next, we prove that for each
$p\in \mathcal{S}^*\cup\mathcal{S}(\{f_k''\})$, $f_k'$ also converges in
a neighborhood of $p$.

Let $g_{f_k'}=e^{2u_k'}g_{euc}$.
Since $f_k'\in W^{2,2}_{conf}
(D_{4r}(p))$ with $\int_{D_{4r}(p)}|A_{f_k'}|^2d\mu_{f_k'}<4\pi-\tau$ when $r$ is
sufficiently small and $k$ sufficiently large,
by the arguments in \cite{KL12},
we can find a $v_k$ solving the equation
$$-\Delta v_k=K_{f_k'}e^{2u_k'},\s z\in D_r\s and\s \|v_k\|_{L^\infty(D_r(p))}<C.$$
Since $f_k'$ converges to a conformal
immersion in $D_{4r}\backslash D_{\frac{1}{4}r}(p)$,
we may assume that
$$\|u_k'\|_{L^\infty(D_{2r}\backslash 
D_r(p))}<C.$$
 Then
$u_k'-v_k$ is a harmonic function with
$\|u_k'-v_k\|_{L^\infty(\partial D_{2r}(p))}<C$,
then we get $\|u_k'(z)-v_k(z)\|_{L^\infty(D_{2r}(p))}<C$
from the Maximum Principle. Thus, $\|u_k'\|_{L^\infty(D_{2r}(p))}<C$,
which implies $\|\nabla f_k'\|_{L^\infty(D_{2r})}<C$.
By the equation $\Delta f_k'=e^{2u_k'}H_{f_k'}$, and
the fact that
$$\|e^{2u_k'}H_{f_k'}\|_{L^2
(D_{2r})}^2<
e^{2\|u_k'\|_{L^\infty}}\int_{D_{2r}}|H_{f_k'}|^2d\mu_{{f_k'}},$$
we get $\|\nabla{f_k'}\|_{W^{1,2}(D_{r})}<C$.
We complete the proof.
\endproof

\end{document}